\theoremstyle{plain}
\newtheorem{theorem}{Theorem}[section]
\newtheorem{proposition}{Proposition}[section]
\newtheorem{lemma}[proposition]{Lemma}
\theoremstyle{remark}
\newtheorem{remark}{Remark}[section]
 \theoremstyle{definition}
\newtheorem{definition}{Definition}[section]
\newtheoremstyle{citing}
  {3pt}
  {3pt}
  {\itshape}
  {}
  {\bfseries}
  {.}
  {.5em}
  {\thmnote{#3}}
\newcommand{\sA}{\mathscr{A}}
\newcommand{\cB}{\mathcal{B}}
\theoremstyle{citing}
\newcommand{\eps}{\varepsilon}
\newcommand{\hC}{\widehat{C}}
\newcommand{\Crit}{\textrm{Crit}}
\newcommand\diam{\mathop{{\rm diam}}}
\newcommand{\R}{\mathbb{R}}
\newcommand{\tf}{\hat{f}}
\newcommand{\tc}{\hat{c}}
\newcommand{\ser}{\textrm{SER}}
\newcommand{\PR}{\textrm{PR}}
\begin{document}

\title[Invariance of the Collet-Eckmann condition]{Topological invariance of the Collet-Eckmann condition for one-dimensional maps}

\author{Huaibin Li }
\address{School of Mathematics and Statistics, Henan University, Kaifeng 475004,
China}
\noindent\email{lihbmath@henu.edu.cn}

\thanks{{\it 2000 Mathematics Subject Classification}: 37C05, 37C15, 37F10}
\keywords{One-dimensional map, the Collet-Eckmann condition, the slow recurrence condition, topological invariance}
\thanks{The author was supported by the National Natural Science Foundation of China (Grant No. 11471098) and the China Scholarship Council (CSC No. 201508410033)  }

\maketitle

\begin{abstract}
 This paper is devoted to study the  topological invariance of several non-uniform hyperbolicity conditions of one-dimensional maps. In contrast with the case of maps with only one critical point, it is known that for maps with several critical points the Collet-Eckmann condition is not in itself invariance under topological conjugacy. We show that the Collet-Eckmann condition together with any of several slow recurrence conditions is invariant under topological conjugacy. This extends and gives a new proof of a result by Luzzatto and Wang that also applies to the complex setting.
\end{abstract}

\section{Introduction}\label{sec:intro}
Let $I$ be a compact interval of $\R.$ Recall that a non-injective continuous map $f: I\to I$ is \emph{multimodal}, if there is a finite
partition of $I$ into intervals on each of which $f$ is injective.
For a differentiable multimodal map $f: I\to I$,  a point of~$I$ is \emph{critical point of~$f$} if the derivative of~$f$ vanishes at it. We denote by~$\Crit(f)$ the set of critical points of~$f$.

\begin{definition}
We say that an interval map $f$ satisfies the \emph{Collet-Eckmann condition} (abbreviated CE condition) if all the periodic points of $f$ are hyperbolic
repelling, and if there are constants $C>0$ and $\lambda >1$ such that for each critical point $c$ of $f$, we have $$|Df^n(f(c))|\geq C\lambda^n.$$
\end{definition}
This type of condition was first introduced by P. Collet and J.-P. Eckmann in~\cite{ColEck83} providing a large class of $S$-unimodal maps of the interval having a finite absolutely continuous invariant measure.
Further investigations by T. Nowicki, S. van Strien, and D. Sands proved the
equivalence of those and several similar properties in the real $S$-unimodal
setting, see~\cite{Now85a, NowSan98} and see also~\cite{Prz98} for rational maps.

 Recall that two multimodal maps $f: I \to I$ and $g: J \to J$ are \emph{topologically conjugate}, if
there exists a homeomorphism $h: I\to J$ such that for each $x\in I$ we have $h(f(x)) = g(h(x))$.  Moreover, if both $h$ and $h^{-1}$ are  H\"{o}lder continuous, then we say that $f$ and $g$ are \emph{bi-H\"{o}lder conjugate}. A
topological conjugacy preserves topological properties of maps such as periodic orbits.

The problem of topological invariance of the Collet-Eckmann condition for $S$-unimodal maps was posed in~\cite{vSt88a} as well as by Guckenheimer and Misiurewicz in the early 1980s. In~\cite{NowPrz98} the authors proved that the Collet-Eckmann condition is invariant under topological conjugacy within
the class of $S$-unimodal maps.  This result does not however generalize to the multimodal setting, see for example~\cite{Mih0810, PrzRivSmi03} for some
counterexamples: a pair of topologically conjugate multimodal maps one of which satisfies Collet-Eckmann condition and the other does not.
But in~\cite{LuzWan06} the authors proved that a strengthened version of the Collet-Eckmann condition for multimodal maps is topologically invariant, see~\cite[Main Theorem]{LuzWan06}, and also~\cite{ wanglanyu01}, for a precise statement.  See also ~\cite{LiShe13} for a recent related result.

In this paper, we give a conceptual proof of~\cite[Main Theorem]{LuzWan06} that is also simpler and shorter than the proof  given by Luzzatto and Wang in~\cite{LuzWan06} that relies heavily on delicate combinatorial arguments. In fact, our method of proof allows us to prove variants of~\cite[Main Theorem]{LuzWan06} for different (and more natural) notions of slow recurrence. We illustrate this by stating Theorems~\ref{thm:1}, ~\ref{thm:2} and~\ref{thm:3}, but other variants are straightforward to obtain.  The main tool is~\cite[Proposition~5.2]{Riv1204} that states a conjugacy between two Lipschitz continuous multimodal
maps that satisfy the Exponential Shrinking of Components condition is bi-H\"{o}lder continuous. This allows us to show that for maps satisfying a slow recurrence condition, the Topological Collet-Eckmann condition implies the Collet-Eckmann condition.

Let us be more precise. Let $I$ be a compact interval of $\R.$
A $C^1$ multimodal map~$f : I \to I$ is \emph{of class~$C^3$ with non-flat critical points}, if
the map~$f$ is of class~$C^3$ outside $\Crit(f)$, and if
for each critical point~$c$ of~$f$ there exists a number $\ell_c>1$ and diffeomorphisms~$\phi$ and~$\psi$ of~$\R$ of class~$C^3$, such that $\phi(c)=\psi(f(c))=0$, and such that on a neighborhood of~$c$ on~$I$, we have
$|\psi\circ f| = |\phi|^{\ell_c}.$

In what follows, let $\sA_\R$ denote the class of all
$C^3$ interval maps with non-flat critical points and with all
periodic points hyperbolic repelling.

\subsection{Statement of results}
In this subsection, we state our main results. A multimodal map $f: I\to I$ in  $\sA_\R$ is \emph{topologically exact}, if for every open subset~$U$ of~$I$ there is an integer $n\geq 1$ such
that $f^n(U) = I.$
\begin{definition}
 Given $\beta$ in $(0, 1),$  we  say that a multimodal  map $f$ in  $\sA_\R$  satisfies the \emph{stretched exponential recurrence condition with respect to exponent $\beta$}
(abbreviated $f\in \ser(\beta)$), if there exists $C > 0$ such that for any two critical points $c, c'$ in $\Crit(f)$ and any positive integer
$n \geq 1$, we have
$$|f^n(c)- c'|\geq C\exp\left(-n^{\beta}\right).$$ Moreover, we say that the map $f$ satisfies the \emph{stretched exponential recurrence condition,} if $f\in \ser(\beta)$ for some $\beta$  in $(0, 1)$.
\end{definition}

\begin{theorem}\label{thm:1}
Let $f, \tf$ be two multimodal maps in $\sA_\R$ that are topologically exact, and that are topologically conjugate by a conjugacy preserving critical points. Assume that
$f$ satisfies both the Collet-Eckmann condition and the stretched exponential  recurrence condition. Then $\tf$ also  satisfies both the Collet-Eckmann condition and  the stretched exponential  recurrence condition.
\end{theorem}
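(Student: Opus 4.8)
The plan is to route everything through the equivalence between the Collet-Eckmann condition and the Exponential Shrinking of Components (ESC) condition, and then invoke the bi-Hölder rigidity statement \cite[Proposition~5.2]{Riv1204}. Concretely, the argument splits into three parts: (i) show that in the class $\sA_\R$, a topologically exact map satisfying CE together with the stretched exponential recurrence condition satisfies ESC; (ii) transfer ESC across the topological conjugacy using the fact that ESC is a purely topological/metric statement about shrinking of pullbacks and that by \cite[Proposition~5.2]{Riv1204} the conjugacy $h$ is bi-Hölder; (iii) recover from ESC (plus slow recurrence, now established for $\tf$) that $\tf$ satisfies CE. The recurrence condition for $\tf$ must be obtained as part of the package, so I would set things up so that slow recurrence is itself one of the metric consequences of ESC that survives bi-Hölder distortion.

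\smallskip

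\textbf{Step 1: CE $+$ SER $\Rightarrow$ ESC for $f$.} The Topological Collet-Eckmann condition is known to be equivalent to ESC, and the genuine Collet-Eckmann condition implies the Topological one under a mild recurrence hypothesis; the role of $f \in \ser(\beta)$ is precisely to control how close the critical orbit comes to other critical points, so that the exponential expansion along $Df^n(f(c))$ propagates to exponential shrinking of the components of $f^{-n}(B)$ for small balls $B$. I would make this precise by a standard pullback/Koebe argument: given a small interval $J$ and a component $W$ of $f^{-n}(J)$, if $W$ stays at definite (stretched-exponentially small) distance from $\Crit(f)$ then the chain rule plus bounded distortion gives $|W| \le C \lambda^{-n}|J|^{\theta}$ for appropriate $\theta$; the stretched-exponential bound is weak enough that it is absorbed into the exponential and one still gets genuine exponential shrinking, i.e. $\diam(W) \le C \lambda_0^{-n}$ for some $\lambda_0 > 1$ uniformly over components. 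Topological exactness is used to guarantee that pullbacks of a fixed small ball eventually cover~$I$, so ESC as stated (for all sufficiently small balls) is the right uniform notion.

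\smallskip

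\textbf{Step 2: transfer ESC and slow recurrence across $h$.} Since $f$ is Lipschitz (indeed $C^3$ with non-flat critical points, hence Lipschitz on $I$) and satisfies ESC, and $\tf$ is likewise Lipschitz, and $h \circ f = \tf \circ h$, \cite[Proposition~5.2]{Riv1204} applies once we also know $\tf$ satisfies ESC --- but actually the cleanest logical order is: ESC of $f$ is equivalent to TCE of $f$, TCE is a topological condition (it is phrased in terms of the topological dynamics, return times, and a ``backward contraction'' that is combinatorial), hence TCE passes to $\tf$ directly through $h$; then TCE of $\tf$ gives ESC of $\tf$; now $h$ is bi-Hölder by \cite[Proposition~5.2]{Riv1204}. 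With $h$ and $h^{-1}$ Hölder, the stretched exponential recurrence condition transfers: $|\tf^n(\tc) - \tc'| = |h(f^n(c)) - h(c')| \ge C'|f^n(c) - c'|^{1/\alpha} \ge C'' \exp(-n^{\beta}/\alpha)$, and since $\beta \in (0,1)$ we still have $\beta/\alpha$ adjustable --- if $\beta/\alpha \ge 1$ one passes to a smaller $\beta$ using that SER$(\beta)$ implies SER$(\beta')$ for $\beta' > \beta$; the point is that Hölder distortion only changes $\beta$ by a bounded factor and cannot push it out of $(0,1)$ after possibly enlarging it, which is harmless. (Here I use that the conjugacy preserves critical points, so $\tc = h(c)$ ranges over $\Crit(\tf)$.)

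\smallskip

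\textbf{Step 3: ESC $+$ SER $\Rightarrow$ CE for $\tf$; periodic orbits.} Finally, ESC for $\tf$ together with the now-established $\tf \in \ser(\beta')$ gives back the Collet-Eckmann condition for $\tf$ by the converse direction of Step 1 (this is again the TCE $\Leftrightarrow$ CE-under-slow-recurrence equivalence, the direction TCE $+$ slow recurrence $\Rightarrow$ CE, which holds in $\sA_\R$). Hyperbolicity of periodic orbits is automatic since $\tf \in \sA_\R$ by hypothesis. \textbf{The main obstacle} I anticipate is Step 1 in the converse direction and its uniformity: deducing a genuine Collet-Eckmann exponential lower bound $|D\tf^n(\tf(\tc))| \ge C\lambda^n$ from exponential shrinking of components requires turning a statement about sizes of pullback intervals into a derivative estimate along the critical orbit, which needs a Koebe/bounded-distortion input and a mechanism (the slow recurrence) ensuring the critical orbit does not destroy distortion control --- in the real setting this is standard but must be stated carefully, and it is exactly the place where \cite[Main Theorem]{LuzWan06} did the heavy combinatorial lifting that we are replacing by the soft bi-Hölder argument of \cite{Riv1204}. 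I would isolate this implication as a preliminary proposition, valid for a single map in $\sA_\R$, so that Theorem~\ref{thm:1} then follows by applying it to $f$, transporting ESC and SER to $\tf$ via Steps 2, and applying it back to $\tf$.
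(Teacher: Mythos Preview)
Your proposal is correct and follows essentially the same strategy as the paper: pass from CE to ESC/TCE (the paper does this directly via Lemma~\ref{tce1esc}, for which SER is in fact not needed), use the topological invariance of TCE together with Lemma~\ref{conjugacybiholder} to obtain a bi-H\"older conjugacy and transport SER to~$\tf$, and finally invoke the key implication ESC $+$ slow enough recurrence $\Rightarrow$ CE, which the paper isolates as Proposition~\ref{pro:ce}. One small correction to Step~2: the H\"older estimate yields $|\tf^n(\tc)-\tc'|\ge C''\exp(-n^{\beta}/\alpha)$ with the factor $1/\alpha$ multiplying $n^{\beta}$, not the exponent~$\beta$ itself, so there is never any danger of the stretched-exponential exponent leaving $(0,1)$.
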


\begin{definition}
Given $\beta>0$, we  say that a multimodal  map $f$ in  $\sA_\R$  satisfies the \emph{exponential recurrence condition with respect to $\beta$}, if there exists $C > 0$ such that for any two critical points $c, c'$ in $\Crit(f)$ and any positive integer
$n \geq 1$, we have
$$|f^n(c)- c'|\geq C\exp\left(-{\beta}n\right).$$ Moreover, we say that $f$ in  $\sA_\R$  satisfies the \emph{subexponential recurrence condition} if for every $\beta>0$ the map $f$ satisfies the exponential recurrence condition with respect to $\beta$.
\end{definition}

\begin{theorem}\label{thm:2}
Let $f, \tf$ be two multimodal maps in $\sA_\R$ that are topologically exact, and that are topologically conjugate by a conjugacy preserving critical points. Assume that
$f$ satisfies both the Collet-Eckmann condition and the subexponential  recurrence condition. Then~$\tf$ also  satisfies both the Collet-Eckmann condition and the subexponential  recurrence condition.
\end{theorem}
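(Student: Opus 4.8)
The plan is to deduce Theorem~\ref{thm:2} from the same machinery that proves Theorem~\ref{thm:1}, exploiting that the subexponential recurrence condition is (by definition) just the conjunction of the exponential recurrence conditions with respect to $\beta$ for every $\beta>0$, and that each such condition is \emph{stronger} than the stretched exponential recurrence condition $\ser(\beta')$ for any $\beta'\in(0,1)$. Concretely, if $f$ satisfies the exponential recurrence condition with respect to $\beta_0$, then since $\exp(-\beta_0 n)\le\exp(-n^{\beta'})$ for all large $n$ (any fixed $\beta'<1$), one has $f\in\ser(\beta')$ after adjusting the constant $C$. So the hypotheses of Theorem~\ref{thm:2} imply those of Theorem~\ref{thm:1}; applying Theorem~\ref{thm:1} immediately gives that $\tf$ satisfies the Collet-Eckmann condition. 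It therefore remains only to upgrade the recurrence conclusion for $\tf$ from stretched exponential to subexponential, i.e.\ to show $\tf$ satisfies the exponential recurrence condition with respect to every $\beta>0$.

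For that upgrade, the key input is the bi-H\"older continuity of the conjugacy $h$ (and its inverse). Once we know, via Theorem~\ref{thm:1} and its proof, that both $f$ and $\tf$ satisfy the Collet-Eckmann condition together with a slow recurrence condition, the maps satisfy the Exponential Shrinking of Components condition, so by~\cite[Proposition~5.2]{Riv1204} the conjugacy $h$ is bi-H\"older: there are constants $\alpha\in(0,1]$ and $M>0$ with
\[
M^{-1}|x-y|^{1/\alpha}\le |h(x)-h(y)|\le M|x-y|^{\alpha}
\]
for all $x,y$. Since $h$ preserves critical points and commutes with the dynamics, for critical points $\tc=h(c)$, $\tc'=h(c')$ of $\tf$ we get
\[
|\tf^n(\tc)-\tc'| = |h(f^n(c))-h(c')| \ge M^{-1}|f^n(c)-c'|^{1/\alpha}.
\]
Now feed in the subexponential recurrence of $f$: given any target $\beta>0$, apply the exponential recurrence condition of $f$ with respect to $\alpha\beta$, yielding $|f^n(c)-c'|\ge C\exp(-\alpha\beta n)$, hence $|\tf^n(\tc)-\tc'|\ge M^{-1}C^{1/\alpha}\exp(-\beta n)$. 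As $\beta>0$ was arbitrary, $\tf$ satisfies the exponential recurrence condition with respect to every $\beta>0$, i.e.\ the subexponential recurrence condition, completing the proof.

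The step I expect to carry the real weight is the invocation of~\cite[Proposition~5.2]{Riv1204}: one must check its hypotheses are met, which requires knowing that \emph{both} $f$ and $\tf$ satisfy Collet-Eckmann plus slow recurrence (equivalently, the Exponential Shrinking of Components condition), and this is exactly what Theorem~\ref{thm:1} and the intermediate results leading to it supply — the Topological Collet-Eckmann condition is topologically invariant by general principles, and combined with slow recurrence it yields Collet-Eckmann. The remaining arithmetic — translating the H\"older exponent $\alpha$ into the right choice of $\beta'<1$ or $\alpha\beta$ in the recurrence bounds, and absorbing multiplicative constants — is routine. In short, Theorem~\ref{thm:2} is not logically independent of Theorem~\ref{thm:1} but a corollary of it together with bi-H\"older control of the conjugacy, the only subtlety being the bookkeeping of exponents under the H\"older distortion.
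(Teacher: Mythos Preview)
Your proposal is correct and follows essentially the same route as the paper: both arguments hinge on the bi-H\"older continuity of the conjugacy (from Lemma~\ref{conjugacybiholder}, once both maps are known to satisfy the Exponential Shrinking of Components condition via Lemma~\ref{tce1esc}) to transfer the recurrence bounds, and on the implication ``ESC $+$ sufficiently slow exponential recurrence $\Rightarrow$ CE'' (Proposition~\ref{pro:ce}) to recover Collet--Eckmann. The only cosmetic difference is that you invoke Theorem~\ref{thm:1} as a black box to obtain CE for $\tf$ before upgrading the recurrence, whereas the paper first transfers the subexponential recurrence directly (this is Lemma~\ref{lem:erc}, whose proof is exactly your H\"older computation) and then applies Proposition~\ref{pro:ce}; note also that CE alone already yields ESC, so the phrase ``together with a slow recurrence condition'' in your justification of ESC is unnecessary.
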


\begin{definition}
 Given $\beta>0,$  we  say that a multimodal  map $f$ in  $\sA_\R$  satisfies the \emph{polynomial recurrence condition with respect to exponent $\beta$}
(abbreviated $f\in \PR(\beta)$), if there exists $C > 0$ such that for any two points $c, c'\in \Crit(f)$ and any
$n \geq 1$, we have
$$|f^n(c)- c'|\geq Cn^{-\beta}.$$ Moreover, $f$ is said to satisfy the \emph{polynomial recurrence condition } if $f\in \PR(\beta)$ for some $\beta>0$.
\end{definition}
\begin{theorem}\label{thm:3}
Let $f, \tf$ be two multimodal maps in $\sA_\R$ that are topologically exact, and that are topologically conjugate by a conjugacy preserving critical points. Assume that
$f$ satisfies both the Collet-Eckmann condition and the polynomial recurrence condition. Then $\tf$ also  satisfies both the Collet-Eckmann condition and polynomial recurrence condition.
\end{theorem}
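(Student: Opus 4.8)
The plan is to reduce Theorem~\ref{thm:3} to the same mechanism used for Theorems~\ref{thm:1} and~\ref{thm:2}, exploiting that all three differ only in the quantitative form of the recurrence condition. First I would observe that the Collet--Eckmann condition together with a slow recurrence condition (here polynomial recurrence) implies the Exponential Shrinking of Components (ESC) condition; this is the expected first step, and it is where the recurrence hypothesis is actually used. Once $f$ satisfies ESC, the map~$\tf$ is Lipschitz (being $C^3$ with non-flat critical points on a compact interval), topologically exact, and conjugate to~$f$; since ESC is a topological invariant among topologically exact maps, $\tf$ also satisfies ESC. Then \cite[Proposition~5.2]{Riv1204} applies and gives that the conjugacy~$h$ and its inverse are H\"older continuous, i.e.\ $f$ and $\tf$ are bi-H\"older conjugate. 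This is the structural heart of the argument and is shared verbatim with the earlier theorems.

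Next I would transfer the polynomial recurrence condition across the bi-H\"older conjugacy. Write $\hat c = h(c)$ and $\hat c' = h(c')$ for critical points $c, c'$ of $f$; since the conjugacy preserves critical points, these range over $\Crit(\tf)$. Using $h\circ f^n = \tf^n \circ h$ and the H\"older estimate $|h(x)-h(y)| \geq C_1 |x-y|^{\alpha}$ for some $\alpha \in (0,1)$, together with the hypothesis $|f^n(c) - c'| \geq C n^{-\beta}$, I get
$$
|\tf^n(\hat c) - \hat c'| = |h(f^n(c)) - h(c')| \geq C_1 |f^n(c) - c'|^{\alpha} \geq C_1 C^{\alpha} n^{-\alpha\beta},
$$
so $\tf$ satisfies the polynomial recurrence condition with respect to exponent $\alpha\beta$. (Note the class of polynomial recurrence is closed under this operation precisely because a power of a polynomial decay is again polynomial decay — this is why the argument is cleanest for $\PR$, slightly more delicate for $\ser(\beta)$ where $\beta$ gets replaced by $\alpha\beta$ and must stay below $1$, and automatic for subexponential recurrence.) By the same token $\tf$ satisfies its own slow recurrence condition, so the implication "ESC $+$ slow recurrence $\Rightarrow$ CE" applied to $\tf$ yields that $\tf$ satisfies the Collet--Eckmann condition.

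Finally I would package the two conclusions: $\tf$ satisfies the polynomial recurrence condition (shown above) and the Collet--Eckmann condition (via ESC for $\tf$ plus the recurrence bound just derived). The main obstacle is the implication from CE plus slow recurrence to ESC, and conversely from ESC plus slow recurrence to CE; this is the genuinely analytic content and must be established as a separate proposition, presumably by a pullback/large-deviation argument controlling the time spent near the critical set in terms of the recurrence rate, so that the polynomially small approach to critical points cannot destroy the exponential growth of derivatives. Everything else — the topological invariance of ESC among topologically exact maps, the invocation of \cite[Proposition~5.2]{Riv1204}, and the H\"older transfer of the recurrence estimate — is routine once that proposition is in hand, and is identical across Theorems~\ref{thm:1}--\ref{thm:3}.
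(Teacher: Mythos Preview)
Your outline matches the paper's proof almost exactly: pass from CE to ESC for~$f$, use topological invariance of TCE/ESC to get ESC for~$\tf$, invoke the bi-H\"older conjugacy to transport the recurrence bound, and then apply the analytic proposition ``ESC $+$ small exponential recurrence $\Rightarrow$ CE'' (Proposition~\ref{pro:ce}) to~$\tf$. Two small corrections are worth making. First, the implication CE $\Rightarrow$ ESC does \emph{not} require any recurrence hypothesis; it is Lemma~\ref{tce1esc} (quoting \cite{Riv1204}) and holds for every topologically exact map in~$\sA_\R$. The recurrence condition is used only in the converse direction, so your ``main obstacle'' is really just one implication, not two. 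Second, your H\"older arithmetic is reversed: if $h^{-1}$ is $\alpha$-H\"older then $|h(x)-h(y)|\ge C_1|x-y|^{1/\alpha}$, so the polynomial exponent for~$\tf$ becomes $\beta/\alpha>\beta$, not $\alpha\beta$; likewise in the $\ser(\beta)$ case the exponent \emph{increases}, which is why the paper only claims $\tf\in\ser(\beta')$ for some $\beta'>\beta$ and then observes that stretched-exponential recurrence still implies subexponential recurrence before invoking Proposition~\ref{pro:ce}. Neither slip affects the conclusion for polynomial recurrence.
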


Recall that $f$ satisfies \emph{the slow recurrence condition} of~\cite{LuzWan06} if every critical point $c$ of $f$ satisfies the following property:
$$\lim_{\delta\to 0^+}\liminf_{n\to +\infty}\frac{1}{n}\sum_{1\leq i \leq n, \atop f^i(c)\in \cB_f(\delta)}\log d(f^i(c))=0,$$ where
$ \cB_f(\delta):=B(\Crit(f), \,\, \delta)$ and $d(x) : = \min\{{|x-c|: c \in \Crit(f)}\}.$

The following is~\cite[Main Theorem]{LuzWan06}, and we give a new proof of it in Section~\ref{sec:LuzWangTheorem}.
\begin{theorem}\label{thm:lw06}
Let $f$ be a multimodal maps in $\sA_\R$. Suppose that $f$ satisfies both the Collet-Eckmann condition and the slow recurrence condition. Then every map $g$ in $\sA_\R$  that is topologically
conjugate to $f$ by a conjugacy preserving critical points also satisfies both  Collet-Eckmann condition and slow recurrence condition.
\end{theorem}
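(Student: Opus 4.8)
## Proof Strategy for Theorem~\ref{thm:lw06}

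\textbf{The plan is to} deduce Theorem~\ref{thm:lw06} from the same machinery that yields Theorems~\ref{thm:1}--\ref{thm:3}, namely the bi-Hölder continuity of the conjugacy (via~\cite[Proposition~5.2]{Riv1204}) together with the implication ``slow recurrence plus Topological Collet-Eckmann $\Rightarrow$ Collet-Eckmann.'' The first step is to observe that the Collet-Eckmann condition together with the slow recurrence condition implies the Topological Collet-Eckmann condition, which is purely topological in nature (expansion along backward branches, or equivalently exponential shrinking of pull-backs) and hence passes to any topologically conjugate map $g$ in $\sA_\R$. In particular $g$ satisfies the Exponential Shrinking of Components condition, so by~\cite[Proposition~5.2]{Riv1204} applied to $f$ and $g$ (after reducing to Lipschitz models, which is legitimate since maps in $\sA_\R$ with non-flat critical points admit such models), the conjugacy $h$ and its inverse are both Hölder continuous with some exponents $\alpha, \alpha' \in (0,1)$.

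\textbf{Next I would} transport the slow recurrence condition from $f$ to $g$ using bi-Hölder continuity. For a critical point $\tilde c$ of $g$ corresponding to a critical point $c$ of $f$, one has $d_g(g^i(\tilde c)) = d_g(h(f^i(c)))$, and the Hölder estimates give
\[
C_1\, d_f(f^i(c))^{1/\alpha'} \;\leq\; d_g(g^i(\tilde c)) \;\leq\; C_2\, d_f(f^i(c))^{\alpha},
\]
so that $\log d_g(g^i(\tilde c))$ and $\log d_f(f^i(c))$ differ by a multiplicative constant (up to an additive bounded term), uniformly in $i$. Since the slow recurrence condition is stated as a double limit $\lim_{\delta\to 0^+}\liminf_{n\to\infty}\frac1n\sum\log d(f^i(c))$, and the set $\{i \le n : g^i(\tilde c)\in \cB_g(\delta)\}$ is comparable to $\{i\le n : f^i(c)\in \cB_f(\delta')\}$ for a suitable $\delta'$ (again by the Hölder bounds), the vanishing of this limit is preserved: bounded multiplicative distortion of the summands and a reindexing of the set of summed indices do not affect whether the normalized sums converge to zero. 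This yields the slow recurrence condition for $g$.

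\textbf{Finally}, having established that $g$ satisfies both slow recurrence and (being topologically conjugate to a map with expanding backward branches) the Topological Collet-Eckmann condition, I would invoke the implication that under slow recurrence, Topological Collet-Eckmann implies Collet-Eckmann — this is precisely the implication the introduction advertises as the core new contribution, and the place where the non-flatness of the critical points and a Koebe-type distortion argument along the orbit of the critical value enter. The point is that the slow recurrence condition exactly controls the loss of hyperbolicity caused by close returns of the critical orbit to $\Crit(g)$, so that uniform exponential expansion along backward branches can be converted into uniform exponential growth of $|Dg^n(g(\tilde c))|$.

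\textbf{The main obstacle} I anticipate is the verification that the Topological Collet-Eckmann condition (in whichever of its equivalent formulations is used — backward contraction, exponential shrinking of components, or uniform expansion of backward branches) is genuinely a \emph{topological} invariant in the multimodal setting, since the naive Collet-Eckmann condition is \emph{not}. This requires care: one must use a characterization of TCE that refers only to the dynamics of pull-backs of small intervals and the combinatorial position of critical points, so that the conjugacy $h$ transports it verbatim, and then separately re-derive the bi-Hölder property. The second delicate point is ensuring the double limit in the slow recurrence condition is robust under the Hölder change of variables; this is where one must be slightly careful that the exponents $\alpha,\alpha'$ are uniform (independent of the critical point and of $\delta$), which they are since~\cite[Proposition~5.2]{Riv1204} gives global Hölder exponents.
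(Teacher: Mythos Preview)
Your proposal is correct and follows essentially the same three-step architecture as the paper's proof: first use that CE implies TCE (Lemma~\ref{tce1esc}) together with the topological invariance of TCE and Lemma~\ref{conjugacybiholder} to obtain bi-H\"older continuity of the conjugacy; then transport the slow recurrence condition from~$f$ to~$g$ via the bi-H\"older estimates exactly as you sketch; and finally invoke the implication ``Exponential Shrinking of Components plus slow recurrence $\Rightarrow$ CE,'' which is the content of Proposition~\ref{pro:slowyexp2ce}. Two minor remarks: maps in $\sA_\R$ are $C^1$ on a compact interval and hence already Lipschitz, so no ``reduction to Lipschitz models'' is needed; and CE alone (without slow recurrence) already implies TCE by Lemma~\ref{tce1esc}, so the first step is slightly simpler than you indicate.
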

\begin{remark}
The strategy used by Luzzatto and Wang to prove Theorem~\ref{thm:lw06} is to define a new condition
which they call the {\it Topological Slow Recurrence (TSR) condition}, see the precise definition on page $349$ in~\cite{LuzWan06},
which depends only on the combinatorics of the critical orbits. In particular TSR is invariant under topological conjugacy. Then they proved that this condition is equivalent to the simultaneous occurrence of the slow recurrence condition and the Collet-Eckmann conditions.
\end{remark}
\subsection{Organization}
The paper is organized as follows. In Section~\ref{sec:preliminaries}, we recall some definitions of non-uniform hyperbolicity conditions, and collect
some results of non-uniform hyperbolicity conditions and Koebe distortion. In Section~\ref{sec:proofsoftheorems}, we give the proofs of our
Theorems~\ref{thm:1}, \ref{thm:2} and~\ref{thm:3}. In Section~\ref{sec:LuzWangTheorem}, we give a new proof of Theorem~\ref{thm:lw06}. In Appendex~\ref{sec:rational},  we give an analog  theorem in the complex setting, see Theorem~\ref{thm:rational}.

\subsection{Acknowledgments}
\label{ss:Acknowledgements}
The author would like to thank Juan Rivera-Letelier  for his stimulating conversations and discussions, and
for his useful comments and corrections to earlier versions of this paper. The author also thanks Weixiao Shen and Mike Todd for some helpful
comments in the early versions of this manuscript.
Finally, the author would  also like to thank the School of  Mathematics and Statistics in University of St Andrews for
the optimal working conditions provided to his visiting, where this article was completed.

\section{Preliminaries}\label{sec:preliminaries}
In what follows, let $I$ and  $J$ be two compact intervals of $\R$. In this section, we collect some known results which will be used in proving our theorems.

\subsection{Topological Collect-Eckmann condition}  Let $f: I\to I$
be a multimodal  map and fix $r > 0.$ Recall that given
an integer $n \geq 1,$ the {\it criticality} of $f^n$ at a point $x$ of $I$ with respect to $r$ is the number of those $j$
in $\{0, \cdots, n-1\}$ such that the connected component of $f^{(n-j)}(B(f^n(x), r))$ containing
$f^j(x)$ contains a critical point of~$f$ in $\Crit(f).$  We say that  $f$ satisfies the \emph{Topological Collet-Eckmann  condition} (abbreviated TCE condition), if for some choice of $r > 0$ there are constants $D\geq 1$
and~$\theta$ in $(0, 1),$ such that the following property holds: For each point~$x$ in~$I$ the
set $G_x$ of all those integers $m\geq 1$ for which the criticality of $f^m$ at $x$ is less than
or equal to $D,$ satisfies $$\liminf_{n\to +\infty}\frac{1}{n} \#(G_x \cap \{1, \cdots, n\}) \geq \theta.$$  The TCE condition was first introduced in~\cite{NowPrz98}.  Clearly, the TCE condition is topologically invariant.

Recall that an interval map $f:I \to I$ satisfies the \emph{Exponential Shrinking of Components condition with respect to $\lambda > 1$}, if there are constants $C>1$ and $\delta > 0$
such that for every interval $J$ contained in $I$ that satisfies $|J|\leq \delta,$
the following holds: For every positive integer $n$ and every connected component
$W$ of $f^{-n}(J)$ we have the following inequality $$|W|\leq C\lambda^{-n}.$$ Moreover, we say that $f$ satisfies the \emph{Exponential Shrinking of Components condition} if there exists $\lambda>1$ such that $f$ satisfies the Exponential Shrinking of Components condition with respect to $\lambda$.

We will use the following fact that was proved by Rivera-Letelier in ~\cite{Riv1204}.
\begin{lemma}[Corollary~A and~C, \cite{Riv1204}]\label{tce1esc}
Let $f: I\to I$ be a multimodal  interval map in $\sA_\R$ that is topologically exact,
then the TCE condition is equivalent to the Exponential Shrinking of Components condition. Moreover, if $f$ satisfies the CE condition, then $f$ also satisfies the Exponential Shrinking of Components condition, and the TCE condition.
\end{lemma}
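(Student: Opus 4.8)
The statement bundles two assertions: the equivalence TCE $\iff$ ESC for topologically exact maps in $\sA_\R$, and the implication CE $\Rightarrow$ ESC (whence CE $\Rightarrow$ TCE). The plan is to establish three implications, CE $\Rightarrow$ ESC, ESC $\Rightarrow$ TCE, and TCE $\Rightarrow$ ESC. Three tools are used throughout: the Koebe distortion lemma for $C^3$ maps with non-flat critical points (so that distortion along an inverse branch is controlled by the Koebe space available); Ma\~{n}\'{e}'s hyperbolicity theorem, which applies here because all periodic points are hyperbolic repelling, so there are no attracting or neutral cycles, and hence the map is uniformly expanding on orbits that avoid a fixed neighborhood of $\Crit(f)$; and a binding (shadowing) argument that transfers the expansion supplied by CE along the critical orbits to orbits that pass close to a critical point.

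For CE $\Rightarrow$ ESC, I would decompose an orbit segment into free periods and binding periods. During a free period the orbit stays outside a neighborhood $V$ of $\Crit(f)$, and Ma\~{n}\'{e}'s theorem gives constants $C_V>0$ and $\kappa_V>1$ with $|Df^n(y)| \geq C_V \kappa_V^n$. When the orbit enters a small neighborhood of a critical point $c$, I would shadow it by the postcritical orbit of $c$: the non-flat normal form $|\psi\circ f| = |\phi|^{\ell_c}$ converts the distance to $c$ into a controlled loss of derivative, while the CE inequality $|Df^k(f(c))| \geq C\lambda^k$ supplies exponential growth along the shadowed piece, so that each deep return recovers, up to a uniformly bounded factor, the derivative lost at the close approach. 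Multiplying the estimates over free and binding periods yields a uniform exponential lower bound for $|Df^n|$ along the relevant segments; passing to inverse branches and applying Koebe then translates this into $|W| \leq C\lambda^{-n}$ for every component $W$ of $f^{-n}(J)$ with $|J|$ small, which is exactly ESC.

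For the equivalence, ESC $\Rightarrow$ TCE is the more direct direction. Fixing $r \leq \delta$, ESC forces the component $W_j$ of $f^{-(m-j)}(B(f^m(x),r))$ containing $f^j(x)$ to satisfy $|W_j| \leq C\lambda^{-(m-j)}$, and these are linked by $f(W_j) \subseteq W_{j+1}$. Thus the components that are not yet exponentially small are confined to a bounded range of scales near the top of the pullback, and since $\Crit(f)$ is finite and periodic recurrence is hyperbolic, one controls how often a critical point can lie in the family $\{W_j\}$; a Koebe/distortion argument then yields criticality at most $D$ for a set of times $m$ of positive lower density, giving TCE. The reverse implication TCE $\Rightarrow$ ESC is where the real work lies, and I expect it to be the main obstacle: TCE furnishes, for each $x$, only a density-$\theta$ set of times $m$ at which the pullback of $B(f^m(x),r)$ has criticality $\leq D$. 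At those times the $\leq D$ critical passages are controlled by the non-flat normal form and Koebe gives bounded distortion of $f^m$ on the corresponding component, while topological exactness prevents a component from remaining at a fixed scale. The difficulty is to upgrade this positive-density, bounded-distortion information into a genuinely uniform exponential estimate valid for all $n$ and all small $J$; this requires a Pliss-type selection of good return times, telescoping the bounded-distortion comparisons across the intervening low-density bad times while keeping the accumulated distortion bounded. Managing the interplay between the non-flatness exponents $\ell_c$, the TCE constants $D$ and $\theta$, and the large-scale expansion from topological exactness is the crux of the whole lemma.
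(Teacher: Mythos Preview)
The paper does not provide its own proof of this lemma: it is stated purely as a citation of Corollary~A and Corollary~C of \cite{Riv1204} and is used as a black box throughout. There is therefore no in-paper argument to compare your proposal against; the author's ``proof'' is the reference itself.

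Your sketch is a plausible outline of how such results are established in the cited work and the surrounding literature: the free/binding decomposition together with Ma\~{n}\'{e}'s hyperbolicity for CE $\Rightarrow$ ESC, the exponential size bound on pull-backs for ESC $\Rightarrow$ TCE, and a Pliss-type selection with telescoped Koebe distortion for TCE $\Rightarrow$ ESC. That said, what you have written is a strategy rather than a proof, and the hard direction TCE $\Rightarrow$ ESC in particular requires substantially more than you indicate (in \cite{Riv1204} and \cite{PrzRivSmi03} it passes through intermediate equivalent conditions and nontrivial combinatorial estimates). Within the scope of this paper, however, none of that is expected: the correct response here is simply to invoke \cite{Riv1204}.
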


We will also use the following lemma.

\begin{lemma}[Proposition 5.2, \cite{Riv1204}]\label{conjugacybiholder}
Let $I$ and $J$ be two compact intervals of~$\R$. Let $f: I \to I$ be a Lipschitz continuous multimodal map
and  $g: J \to J$ a multimodal map satisfying the Exponential Shrinking of Components
condition. If $h: I \to J$ is a homeomorphism conjugating $f$ to $g$, then $h$ is H\"{o}lder continuous.
\end{lemma}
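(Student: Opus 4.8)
The plan is to extract an explicit H\"older exponent by playing off the rate at which $f$ can expand an interval (bounded by a Lipschitz constant) against the rate at which $g$ contracts pull-backs (the Exponential Shrinking of Components condition). Fix a Lipschitz constant $L$ for $f$, which we may assume satisfies $L>1$ since any Lipschitz constant can be enlarged, and let $C_0>1$, $\delta>0$, $\lambda>1$ be the constants furnished by the Exponential Shrinking of Components condition for $g$. Since $I$ is compact and $h$ is continuous, $h$ is uniformly continuous, so we may fix $\eta>0$ such that every subinterval $A\subseteq I$ with $|A|<\eta$ satisfies $|h(A)|\le\delta$.

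Now take $x\neq y$ in $I$ with $|x-y|<\eta$, and let $T$ be the closed interval with endpoints $x$ and $y$; since $h$ is a homeomorphism between intervals it is monotone, so $h(T)$ is the closed interval with endpoints $h(x),h(y)$ and $|h(x)-h(y)|=|h(T)|\le\delta$. Using the conjugacy relations $h\circ f^{n}=g^{n}\circ h$ and $f^{n}(T)=h^{-1}\big(g^{n}(h(T))\big)$, I would track the forward $g$-orbit of the interval $h(T)$. If $|g^{j}(h(T))|\le\delta$ for every $j\ge 0$, then $h(T)$ lies in a connected component of $g^{-j}\big(g^{j}(h(T))\big)$, so the Exponential Shrinking of Components condition gives $|h(T)|\le C_0\lambda^{-j}$ for all $j$, forcing $h(T)$ to be a single point, which is impossible as $h$ is injective and $x\neq y$. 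Hence there is a first integer $m\ge 1$ with $|g^{m}(h(T))|>\delta$; since $|g^{m-1}(h(T))|\le\delta$, applying the Exponential Shrinking of Components condition to $h(T)$ viewed as a subset of a component of $g^{-(m-1)}\big(g^{m-1}(h(T))\big)$ yields $|h(T)|\le C_0\lambda^{-(m-1)}$.

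It remains to bound $m$ from below in terms of $|T|$. Since $g^{m}(h(T))=h(f^{m}(T))$ has length greater than $\delta$, the choice of $\eta$ forces $|f^{m}(T)|\ge\eta$; on the other hand, $f$ being $L$-Lipschitz gives $|f^{m}(T)|\le L^{m}|T|$. Combining, $L^{m}\ge \eta/|T|$, i.e. $m\ge(\log\eta-\log|T|)/\log L$, and substituting into the previous inequality gives $|h(x)-h(y)|\le C_0\lambda\,\eta^{-\alpha}|x-y|^{\alpha}$ with $\alpha:=\log\lambda/\log L>0$. For pairs with $|x-y|\ge\eta$ the trivial bound $|h(x)-h(y)|\le|J|$ suffices, and after absorbing constants and replacing $\alpha$ by $\min\{\alpha,1\}$ we conclude that $h$ is H\"older continuous.

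I expect the penultimate step --- converting ``the $g$-orbit of $h(T)$ needs $m$ iterates to grow past scale $\delta$'' into the estimate $m\gtrsim-\log|T|$ --- to be the crux of the argument. This is precisely where the Lipschitz hypothesis on $f$ is essential, since it caps the expansion through $|f^{m}(T)|\le L^{m}|T|$, and it is combined with the uniform continuity of $h$, which converts ``$g^{m}(h(T))$ has definite size'' into ``$f^{m}(T)$ has definite size''. If $f$ were merely continuous neither inequality would be available, and indeed the conclusion is known to fail in that generality. Everything else --- choosing the stopping time $m$, ruling out the degenerate case via Exponential Shrinking of Components, and dealing with far-apart points by the crude bound --- is routine bookkeeping.
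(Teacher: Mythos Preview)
Your argument is correct and is, in fact, the standard proof of this statement; the paper does not supply its own proof but merely cites it as Proposition~5.2 of \cite{Riv1204}. The mechanism you describe---choosing a first time $m$ at which $g^{m}(h(T))$ exceeds scale $\delta$, bounding $|h(T)|$ via Exponential Shrinking of Components applied at time $m-1$, and bounding $m$ from below via $|f^{m}(T)|\le L^{m}|T|$ together with $|f^{m}(T)|\ge\eta$---is exactly the argument in \cite{Riv1204}, so there is nothing to contrast.
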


Given a multimodal map $f: I\to I,$ an integer $n\geq 1$ and a subset $J$ of $I$, a connected component of $f^{-n}(J)$ will be called a \emph{pull-back} of $J$ by $f^n.$ The following general facts of multimodal maps will be used for several times in what follows, see for example~\cite{Riv1206} for a proof.
\begin{lemma}[Lemma~A.2, \cite{Riv1206}]\label{l:pullstable}
Let $f: I \to I$ be an interval map in~$\sA$ that is topologically exact.
Then for every $\kappa> 0$ there is $\delta> 0$ such that for every~$x$ in~$I$, every integer~$n\geq 1$, and every pull-back $W$ of $B(x, \delta)$ by $f^n$, we have $|W| < \kappa.$
\end{lemma}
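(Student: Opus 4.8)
The plan is to argue by contradiction, separating the large-iterate case from the bounded-iterate case: topological exactness disposes of the former and the non-flat (in particular $C^1$, finite-critical-set) structure of $f$ disposes of the latter. The first step I would carry out is a \emph{uniform exactness} reduction, namely that for the given $\kappa>0$ there is an integer $N\geq 1$ such that every subinterval $J\subseteq I$ with $|J|\geq\kappa$ satisfies $f^N(J)=I$. To obtain this, observe that topological exactness applied to $U=I$ yields some $m$ with $f^m(I)=I$; since the images $f(I)\supseteq f^2(I)\supseteq\cdots$ are nested and decreasing, this forces $f(I)=I$, so $f$ is surjective and $f^k(U)=I$ implies $f^{k+1}(U)=I$. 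Now fix a finite $\tfrac{\kappa}{6}$-dense subset $\{x_1,\dots,x_k\}$ of $I$ and put $U_i:=B(x_i,\tfrac{\kappa}{3})\cap I$. By topological exactness there is $n_i$ with $f^{n_i}(U_i)=I$; set $N:=\max_i n_i$, so that $f^N(U_i)=f^{N-n_i}(f^{n_i}(U_i))=I$ for every $i$. Any interval $J=[a,b]$ with $|J|\geq\kappa$ contains the subinterval $[a+\tfrac{\kappa}{3},b-\tfrac{\kappa}{3}]$, whose half-length is at least $\tfrac{\kappa}{6}$ and which therefore contains some $x_i$; consequently $B(x_i,\tfrac{\kappa}{3})\subseteq J$ and $f^N(J)\supseteq f^N(U_i)=I$.

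Next I would set up the contradiction. Suppose the lemma fails for some $\kappa>0$. Then there exist $\delta_m\to 0^+$, points $x_m\in I$, integers $n_m\geq 1$, and pull-backs $W_m$ of $B(x_m,\delta_m)$ by $f^{n_m}$ with $|W_m|\geq\kappa$; by the definition of a pull-back we have $f^{n_m}(W_m)\subseteq B(x_m,\delta_m)$. If $n_m\geq N$ for infinitely many $m$, then along that subsequence uniform exactness and surjectivity give $f^{n_m}(W_m)=f^{\,n_m-N}\bigl(f^N(W_m)\bigr)=f^{\,n_m-N}(I)=I$, which is impossible once $2\delta_m<|I|$. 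Hence $n_m$ is bounded, and after passing to a subsequence I may assume $n_m\equiv n$ is constant, that $x_m\to x_\infty$, and that $W_m\to W_\infty$ in the Hausdorff metric, where $W_\infty$ is an interval with $|W_\infty|\geq\kappa$.

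It remains to rule out the bounded-iterate case, and here I would combine continuity with non-flatness. For each $y\in W_\infty$ choose $y_m\in\overline{W_m}$ with $y_m\to y$; then $f^n(y_m)\in\overline{B(x_m,\delta_m)}$, and since $f^n$ is continuous and $\overline{B(x_m,\delta_m)}\to\{x_\infty\}$, passing to the limit gives $f^n(y)=x_\infty$. Thus $f^n$ is constant on the nondegenerate interval $W_\infty$. But $f$ is of class $C^1$ with $\Crit(f)$ finite, so $(f^n)'(y)=\prod_{j=0}^{n-1}f'(f^j(y))$ vanishes only on the finite set $\bigcup_{j=0}^{n-1}f^{-j}(\Crit(f))$; hence $f^n$ cannot be constant on any interval, a contradiction. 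This establishes the lemma.

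The main obstacle is precisely this bounded-iterate case. Topological exactness is an asymptotic statement and says nothing about small $n$, so the uniform-exactness argument only controls pull-backs under high iterates; to prevent a long interval from collapsing to a point under a \emph{fixed} iterate one genuinely needs the non-flat structure of the critical points, which guarantees that $f^n$ has only finitely many critical points and is therefore nowhere locally constant. Everything else — the Hausdorff compactness extraction, the passage to the limit, and the surjectivity bookkeeping — is routine.
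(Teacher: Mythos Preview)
The paper does not prove this lemma itself: it is quoted verbatim as Lemma~A.2 of \cite{Riv1206} with the comment ``see for example~\cite{Riv1206} for a proof'', so there is no in-paper argument to compare against. I can therefore only assess your proof on its own merits.

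Your argument is correct. The decomposition into a large-iterate case (handled by a uniform version of topological exactness) and a bounded-iterate case (handled by a compactness/limit argument together with the finiteness of $\Crit(f^n)$) is exactly the natural strategy. The uniform-exactness step is carefully done: the surjectivity of $f$ follows from the nesting $f(I)\supseteq f^2(I)\supseteq\cdots$ together with $f^m(I)=I$, and the finite-cover argument with the $\kappa/6$-dense set is clean. In the bounded-iterate case your Hausdorff-limit extraction is fine; the key point, that $f^n$ cannot be constant on a nondegenerate interval because $(f^n)'$ vanishes only on the finite set $\bigcup_{j=0}^{n-1}f^{-j}(\Crit(f))$, uses precisely the multimodal/non-flat hypothesis.

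Two cosmetic remarks. First, in the large-iterate case you do not need a subsequence: a single $m$ with $n_m\ge N$ and $2\delta_m<|I|$ already gives the contradiction. Second, one could bypass the Hausdorff machinery in the bounded case by a direct uniform-continuity argument for the finitely many inverse branches of $f,\dots,f^{N-1}$, but your limit argument is shorter to write and arguably cleaner.
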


\subsection{Distortion Lemmas}
Let~$\tau > 0$ and let $J_1 \subset J_2$ be two intervals of $I$. We say that $J_1$ is \emph{$\tau$-well inside~$J_2$,} if both components of $J_2\setminus J_1$ have length at least~$\tau|J|$.

\begin{lemma}[Theorem~A, \cite{LiShe10a}]\label{koebe}
 Let $f:I \to I$ be a multimodal map in $\sA$. Then for each $\tau > 0$ there exist
$C>1$ and $\xi>0$ satisfying the following. Let $T\subset I$ be an open
interval, $J$ a closed subinterval of $T$  and an integer $s\geq 1$ such that the following hold:
\begin{enumerate}[1.]
\item $f^s: T \to f^s(T)$ is a diffeomorphism;
\item $|f^s(T)|\leq \xi$;
\item $f^s(J)$ is $\tau$-well inside $f^s(T).$
\end{enumerate}
Then for each pair $x$ and $y$ of points in $J$ we have
$$\frac{|Df^s(x)|}{|Df^s(y)|}\leq C.$$
Furthermore, $C \to 1$  as $\tau \to +\infty.$
\end{lemma}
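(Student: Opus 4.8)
The plan is to combine two ingredients. The first is the cross-ratio distortion calculus, available for \emph{any} $C^3$ multimodal map with non-flat critical points, which yields a Koebe-type distortion bound for $f^s|_J$ depending on $\tau$ and on the total length $\Sigma(T):=\sum_{i=0}^{s-1}|f^i(T)|$ of the orbit of $T$, and which tends to $1$ as $\tau\to+\infty$ provided $\Sigma(T)$ stays small. The second is a real-bounds estimate asserting that, for maps in $\sA$, the hypotheses that $f^s|_T$ is a diffeomorphism and that $|f^s(T)|$ is small already \emph{force} $\Sigma(T)$ to be small. Combining the two, and choosing $\xi$ at the end, gives the lemma.

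For the first ingredient, recall the cross-ratio operators: for an interval $M\subset I$ with a subinterval $N$ and $M\setminus N=L\sqcup R$, put $C(M,N)=\frac{|M|\,|N|}{|L|\,|R|}$ and $B(M,N)=\frac{|M|\,|N|}{|N\cup L|\,|N\cup R|}$. The model maps $t\mapsto|t|^{\ell}$ with $\ell>1$ have negative Schwarzian derivative, hence do not decrease $C$ or $B$; and any $C^3$ diffeomorphism $g$ onto its image satisfies $B(g(M),g(N))\ge(1-K_0|g(M)|)\,B(M,N)$ once $|g(M)|$ is below a fixed threshold, $K_0$ depending only on the $C^3$-data of $f$. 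Using the non-flat normal form $|\psi\circ f|=|\phi|^{\ell_c}$ near each critical point $c$ to write $f$, near $\Crit(f)$, as such a model map composed with $C^3$ diffeomorphisms of bounded distortion, one obtains $K=K(f)>0$ and $\eta_0=\eta_0(f)>0$ such that $f$ monotone on $M$, $N\subset M$, $|f(M)|\le\eta_0$ imply $B(f(M),f(N))\ge(1-K|f(M)|)\,B(M,N)$. Assuming every $|f^i(T)|$, $0\le i\le s$, is $\le\eta_0$, telescoping along the orbit — legitimate by hypothesis~(1) — gives $B(f^s(T),f^s(J))\ge\exp(-2K\Sigma(T))\,B(T,J)$ for $\Sigma(T)$ small, and the standard real Koebe bookkeeping converts this together with hypothesis~(3) into $\frac{|Df^s(x)|}{|Df^s(y)|}\le\Phi(\tau,\Sigma(T))$ for all $x,y\in J$, where $\Phi$ is continuous, non-decreasing in its second argument, and $\Phi(\tau,0^+)=:\kappa(\tau)$ is the classical real Koebe constant, with $\kappa(\tau)\to1$ as $\tau\to+\infty$.

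For the second ingredient, the claim is that for every $\eps>0$ there is $\xi>0$ such that $f^s\colon T\to f^s(T)$ a diffeomorphism and $|f^s(T)|\le\xi$ imply $\Sigma(T)<\eps$; in particular each $|f^i(T)|<\eps$, so for $\eps\le\eta_0$ the previous paragraph applies. One argues by contradiction with a sequence $(T_k,s_k)$, $f^{s_k}|_{T_k}$ monotone, $|f^{s_k}(T_k)|\to0$, $\Sigma(T_k)\ge\eps$. Fixing small $\rho>0$, split $\{0,\dots,s_k-1\}$ into maximal runs where $f^i(T_k)$ stays $\rho$-far from $\Crit(f)$ and runs where it comes $\rho$-close. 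On the far runs one uses Ma\~n\'e's hyperbolicity theorem: the set of points whose forward orbit avoids $B(\Crit(f),\rho)$ is compact, forward invariant, and free of critical points and — since all periodic points of $f$ are hyperbolic repelling — of non-repelling periodic points, hence uniformly expanding; so along such a run the lengths grow at a fixed geometric rate, the run contributes to $\Sigma(T_k)$ at most a bounded multiple of its final length, and the same expansion bounds each far run's length in terms of $\rho$. The near runs are monotone images of short intervals in normal-form coordinates and contribute $O(\rho)$ each; the absence of wandering intervals for $C^3$ non-flat maps then constrains how the runs concatenate and forces $\Sigma(T_k)$ to be dominated by $|f^{s_k}(T_k)|+O(\rho)$, contradicting $\Sigma(T_k)\ge\eps$ once $k$ is large and $\rho$ small. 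Granting the claim, fix $\tau$, choose $\eps\le\eta_0$ small enough that $\Phi(\tau,\eps)\le\kappa(\tau)+\tfrac{1}{\tau}$, set $C:=\max\{\Phi(\tau,\eps),\,1+\tfrac{1}{\tau}\}$, and let $\xi$ be as provided; then $C>1$ and $C\to1$ as $\tau\to+\infty$.

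The main obstacle is the second ingredient: the cross-ratio calculus is routine once the non-flat normal form is available, whereas propagating smallness of $|f^s(T)|$ back to smallness of $\Sigma(T)$ — with no negative-Schwarzian hypothesis and no non-uniform hyperbolicity of $f$ itself — is precisely where all the defining features of $\sA$ (finitely many critical points, non-flatness, hyperbolic repelling periodic points, absence of wandering intervals) are needed, and is the delicate part carried out in~\cite{LiShe10a}.
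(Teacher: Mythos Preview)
The paper does not prove this lemma: it is quoted as Theorem~A of~\cite{LiShe10a} and used as a black box. There is therefore no argument in the present paper to compare your proposal against.

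As a sketch of the content of~\cite{LiShe10a}, your outline is structurally sound: the result does split into (i) the classical cross-ratio Koebe calculus, which bounds the distortion of $f^s|_J$ by a function of $\tau$ and the total orbit length $\Sigma(T)=\sum_{i=0}^{s-1}|f^i(T)|$, and (ii) a real-bounds statement that smallness of $|f^s(T)|$ together with monotonicity of $f^s|_T$ forces $\Sigma(T)$ to be small. You correctly identify (ii) as the substantive step and, in your final paragraph, explicitly defer to~\cite{LiShe10a} for it. The intermediate contradiction sketch via Ma\~n\'e's theorem is plausible in spirit but loose as written: the clause ``the absence of wandering intervals \dots forces $\Sigma(T_k)$ to be dominated by $|f^{s_k}(T_k)|+O(\rho)$'' hides real work, since one must also bound the \emph{number} of near-critical runs along a monotone branch and show the far-run contributions telescope, and neither Ma\~n\'e's theorem nor the no-wandering-interval theorem delivers this directly. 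Since you ultimately cite~\cite{LiShe10a} for exactly this point, your proposal is best read as a proof-by-citation with motivation, which is appropriate for a lemma the paper itself imports without proof.
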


\section{Proof of main Theorems}\label{sec:proofsoftheorems}
This section is devoted to provide the proofs of  Theorems~\ref{thm:1}, ~\ref{thm:2} and~\ref{thm:3}, which depend on the following two propositions.
\begin{proposition}\label{Pro:both}
Let $f:I\to I$ and  $\tf:J\to J$ be  multimodal maps in $\sA_\R$ that are topologically exact, and that are topologically conjugate by a conjugacy preserving critical points.  Assume that
$f$ satisfies both the Exponential Shrinking of Components condition and $\ser(\beta)$ for some $\beta \in (0,\, 1)$. Then~$\tf$ satisfies the TCE condition, and there is $\beta'>0$ such that $\tf \in \ser( \beta').$
\end{proposition}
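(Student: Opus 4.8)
The plan is to transport the dynamical information from $f$ to $\tf$ through the conjugacy $h$, using the bi-Hölder regularity guaranteed by Lemma~\ref{conjugacybiholder}. First I would observe that since $f$ satisfies the Exponential Shrinking of Components condition and is topologically exact, Lemma~\ref{tce1esc} tells us $f$ satisfies the TCE condition; since the TCE condition is topologically invariant and $h$ conjugates $f$ to $\tf$, the map $\tf$ also satisfies the TCE condition. This already settles the first assertion and, moreover, applying Lemma~\ref{tce1esc} once more (in the reverse direction), $\tf$ satisfies the Exponential Shrinking of Components condition with respect to some $\tilde\lambda>1$.

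Next I would extract the Hölder exponents. Both $f$ and $\tf$ are $C^3$ multimodal maps on compact intervals with non-flat critical points, hence Lipschitz continuous. Applying Lemma~\ref{conjugacybiholder} to the pair $(f,\tf)$ with the conjugacy $h$, we get that $h$ is Hölder continuous; applying it to the pair $(\tf,f)$ with the conjugacy $h^{-1}$, we get that $h^{-1}$ is Hölder continuous. So there are constants $\alpha_1,\alpha_2\in(0,1]$ and $C_1,C_2>0$ with
$$|h(x)-h(y)|\leq C_1|x-y|^{\alpha_1},\qquad |h^{-1}(u)-h^{-1}(v)|\leq C_2|u-v|^{\alpha_2}$$
for all $x,y\in I$ and $u,v\in J$.

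Now I would prove the recurrence estimate for $\tf$. Let $\tilde{c},\tilde{c}'$ be critical points of $\tf$. Since $h$ preserves critical points, $c:=h^{-1}(\tilde{c})$ and $c':=h^{-1}(\tilde{c}')$ are critical points of $f$, and $\tf^n(\tilde c)=h(f^n(c))$ because $h$ conjugates the maps. Then
$$|\tf^n(\tilde c)-\tilde c'| = |h(f^n(c))-h(c')| .$$
Here I want a lower bound, so the useful inequality is the Hölder continuity of $h^{-1}$: writing $u=\tf^n(\tilde c)$ and $v=\tilde c'$, we have $|f^n(c)-c'|=|h^{-1}(u)-h^{-1}(v)|\leq C_2|u-v|^{\alpha_2}$, hence
$$|\tf^n(\tilde c)-\tilde c'| \geq \left(\frac{|f^n(c)-c'|}{C_2}\right)^{1/\alpha_2}.$$
Since $f\in\ser(\beta)$, there is $C>0$ with $|f^n(c)-c'|\geq C\exp(-n^{\beta})$, so
$$|\tf^n(\tilde c)-\tilde c'| \geq \left(\frac{C}{C_2}\right)^{1/\alpha_2}\exp\!\left(-n^{\beta}/\alpha_2\right).$$
This is not yet of the form $\exp(-n^{\beta'})$ with $\beta'<1$; but since $\beta<1$, for every fixed $\beta'\in(\beta,1)$ we have $n^{\beta}/\alpha_2 \leq n^{\beta'}$ for all sufficiently large $n$, and by adjusting the constant to absorb the finitely many small $n$, we obtain a constant $C'>0$ with $|\tf^n(\tilde c)-\tilde c'|\geq C'\exp(-n^{\beta'})$ for all $n\geq 1$. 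Taking the minimum of the relevant constants over the finitely many pairs $(\tilde c,\tilde c')$ of critical points gives $\tf\in\ser(\beta')$, as desired.

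The routine parts are the elementary manipulation of Hölder inequalities and the constant bookkeeping; the substantive input is entirely imported, namely Lemma~\ref{tce1esc} (CE $\Rightarrow$ ESC $\Rightarrow$ TCE and the equivalence under topological exactness) and Lemma~\ref{conjugacybiholder} (ESC forces bi-Hölder conjugacy). The only genuine subtlety to be careful about is the direction of the Hölder estimate: one needs $h^{-1}$ Hölder — equivalently $h$ to satisfy a lower "anti-Hölder" bound — to turn a lower bound on orbit distances for $f$ into one for $\tf$, and this is exactly why the symmetric application of Lemma~\ref{conjugacybiholder} to both $h$ and $h^{-1}$ is needed. I would also remark that the exponent $\beta'$ produced is worse than $\beta$ (closer to $1$) but still lies in $(0,1)$, which is all the stretched exponential recurrence condition requires.
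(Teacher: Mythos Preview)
Your proof is correct and follows essentially the same route as the paper: transfer TCE to $\tf$ via Lemma~\ref{tce1esc} and topological invariance, use Lemma~\ref{conjugacybiholder} (applied symmetrically) to obtain bi-H\"older regularity of the conjugacy, and then push the recurrence lower bound through the H\"older estimate for $h^{-1}$. If anything, your write-up is slightly more explicit than the paper's in two places --- you spell out why $\exp(-n^{\beta}/\alpha_2)$ still yields $\ser(\beta')$ for some $\beta'\in(\beta,1)$, and you note that establishing ESC for $\tf$ is what licenses the application of Lemma~\ref{conjugacybiholder} in the direction giving H\"older continuity of $h$ (though only the H\"older bound on $h^{-1}$, which needs just the hypothesis that $f$ satisfies ESC, is actually used in the recurrence estimate).
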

\begin{proof}
 By Lemma~\ref{tce1esc} and the topological invariance of the TCE condition, we only need to prove that the map~$f$  satisfies  the stretched exponential recurrence condition. Combing Lemma~\ref{tce1esc} and Lemma~\ref{conjugacybiholder}, we know that there is a bi-H\"{o}lder continuous $h:I \to J$ such that for each $x\in I$ we have $h(f(x))=\tf(h(x))$. Let $K>0$ and $\alpha\in (0,1]$ be the constants such that for each $x,y\in J$ we have $$|h^{-1}(x)-h^{-1}(x)|\leq K|x-y|^\alpha.$$ Let $C>0$ such that for any two points $c, c'\in \Crit(f)$ and any integer
$n \geq 1$, we have
$$|f^n(c)- c'|\geq C\exp(-n^{\beta}).$$  Let $\tc$ and $\tc'$ be any two critical points of $\tf$, and $n\geq 1$. Notice that  $h^{-1}(\tc)$ and  $ h^{-1}(\tc')$  are critical points of $f$. It follows that
\begin{multline*}
K|\tf^n(\tc)-\tc'|^\alpha\geq |h^{-1}(\tf^n(\tc))-h^{-1}(\tc')|\\=|f^n(h^{-1}(\tc))-h^{-1}(\tc')|\geq C \exp\left(-n^{\beta}\right).
\end{multline*}
This gives us $$|\tf^n(\tc)-\tc'|\geq \left(\frac{C}{K}\right)^{1/\alpha}\exp\left(-\frac{n^{\beta}}{\alpha}\right).$$  Therefore,  $\tf$  satisfies  $\ser( \beta')$ for some  $\beta'>\beta$ depending only on  $\alpha$ and $\beta,$ and the proof is complete.
\end{proof}

Following the proof of Proposition~\ref{Pro:both}, we can obtain the following two lemmas. The details are left to the interested reader to check.
\begin{lemma}\label{lem:erc}
 Let $f:I\to I$ and  $\tf:J\to J$ be  multimodal maps in $\sA_\R$ that are topologically exact, and that are topologically conjugate by a conjugacy preserving critical points.  Assume that
$f$ satisfies both the Exponential Shrinking of Components condition and the subexponential recurrence condition. Then~$\tf$ satisfies both of the TCE condition and the subexponential recurrence condition.
\end{lemma}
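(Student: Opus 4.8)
The plan is to mimic the proof of Proposition~\ref{Pro:both} line by line, the only new bookkeeping being to track how the recurrence exponent transforms under a H\"older change of coordinates. First I would get the TCE condition for $\tf$ for free: since $f$ satisfies the Exponential Shrinking of Components condition, Lemma~\ref{tce1esc} gives that $f$ satisfies the TCE condition, and the TCE condition is topologically invariant, so $\tf$ satisfies it as well. Applying Lemma~\ref{tce1esc} once more to $\tf$, which is topologically exact and lies in $\sA_\R$, we conclude that $\tf$ also satisfies the Exponential Shrinking of Components condition.

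Next I would upgrade the conjugacy $h : I \to J$ to a bi-H\"older homeomorphism. Both $f$ and $\tf$ are of class $C^1$ on compact intervals, hence Lipschitz. Applying Lemma~\ref{conjugacybiholder} to $h$, which conjugates the Lipschitz map $f$ to the map $\tf$ satisfying the Exponential Shrinking of Components condition, shows $h$ is H\"older; applying it to $h^{-1}$, which conjugates the Lipschitz map $\tf$ to the map $f$ satisfying the Exponential Shrinking of Components condition by hypothesis, shows $h^{-1}$ is H\"older. Fix once and for all $K > 0$ and $\alpha \in (0,1]$ with $|h^{-1}(x) - h^{-1}(y)| \le K |x - y|^\alpha$ for all $x, y \in J$; these do not depend on any exponent chosen later.

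Now I would transfer the subexponential recurrence. Fix an arbitrary $\beta > 0$. Since $f$ satisfies the subexponential recurrence condition, it satisfies the exponential recurrence condition with respect to $\alpha\beta$, so there is $C_0 > 0$ with $|f^n(c) - c'| \ge C_0 \exp(-\alpha\beta n)$ for all $c, c' \in \Crit(f)$ and all $n \ge 1$. Given $\tc, \tc' \in \Crit(\tf)$, the points $h^{-1}(\tc)$ and $h^{-1}(\tc')$ are critical points of $f$ because the conjugacy preserves critical points, and $h^{-1}(\tf^n(\tc)) = f^n(h^{-1}(\tc))$; hence
$$
K |\tf^n(\tc) - \tc'|^\alpha \ge |h^{-1}(\tf^n(\tc)) - h^{-1}(\tc')| = |f^n(h^{-1}(\tc)) - h^{-1}(\tc')| \ge C_0 \exp(-\alpha\beta n),
$$
which gives $|\tf^n(\tc) - \tc'| \ge (C_0/K)^{1/\alpha} \exp(-\beta n)$. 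As $\beta > 0$ was arbitrary and $K, \alpha$ are independent of $\beta$, the map $\tf$ satisfies the exponential recurrence condition with respect to every $\beta > 0$, i.e.\ the subexponential recurrence condition.

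There is no serious obstacle here; the only point requiring care — and the reason one feeds the exponent $\alpha\beta$ rather than $\beta$ into the hypothesis on $f$ — is that conjugation by a H\"older map degrades the recurrence rate by the factor $\alpha$, and choosing $\alpha\beta$ on the $f$-side precisely compensates so that $\tf$ inherits the rate $\beta$. In particular, no combinatorial argument is needed, in contrast with the original approach of Luzzatto and Wang.
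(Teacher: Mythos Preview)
Your proof is correct and follows exactly the approach the paper intends: the paper does not write out a proof of this lemma but simply says to follow the proof of Proposition~\ref{Pro:both}, and your argument does precisely that, with the natural adaptation of feeding the exponent $\alpha\beta$ into the hypothesis on $f$ so that the H\"older degradation returns the desired exponent $\beta$ for $\tf$.
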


\begin{lemma}\label{lem:prc}
 Let $f:I\to I$ and  $\tf:J\to J$ be  multimodal maps in $\sA_\R$ that are topologically exact, and that are topologically conjugate by a conjugacy preserving critical points.  Assume that
$f$ satisfies both the Exponential Shrinking of Components condition and the polynomial recurrence condition of $\beta>0$. Then~$\tf$ satisfies the TCE condition, and there is $\beta'>0$ such that $\tf\in \PR(\beta').$
\end{lemma}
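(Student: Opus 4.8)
The plan is to follow the proof of Proposition~\ref{Pro:both} essentially verbatim, replacing the stretched-exponential lower bound by the polynomial one. First I would dispose of the TCE claim: since $f$ satisfies the Exponential Shrinking of Components condition, Lemma~\ref{tce1esc} gives that $f$ satisfies the TCE condition; as the TCE condition is topologically invariant and $\tf$ lies in $\sA_\R$ and is topologically exact, it follows that $\tf$ satisfies the TCE condition, and applying Lemma~\ref{tce1esc} once more, that $\tf$ also satisfies the Exponential Shrinking of Components condition.

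Next I would upgrade the conjugacy $h\colon I\to J$ to a bi-H\"older homeomorphism. Every map in $\sA_\R$ is of class $C^1$ on the compact interval on which it is defined, hence Lipschitz continuous, so both $f$ and $\tf$ are Lipschitz. Applying Lemma~\ref{conjugacybiholder} with $g=\tf$ (which satisfies the Exponential Shrinking of Components condition by the previous paragraph) shows that $h$ is H\"older continuous, and applying it with the roles of $f$ and $\tf$ interchanged, using that $f$ satisfies the Exponential Shrinking of Components condition, shows that $h^{-1}$ is H\"older continuous. Thus there are constants $K>0$ and $\alpha\in(0,1]$ such that $|h^{-1}(x)-h^{-1}(y)|\le K|x-y|^\alpha$ for all $x,y\in J$.

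Finally I would transfer the recurrence estimate. Fix $C>0$ with $|f^n(c)-c'|\ge Cn^{-\beta}$ for all $c,c'\in\Crit(f)$ and all $n\ge 1$. Given critical points $\tc,\tc'$ of $\tf$, the points $h^{-1}(\tc)$ and $h^{-1}(\tc')$ are critical points of $f$ since the conjugacy preserves critical points, and $h^{-1}(\tf^n(\tc))=f^n(h^{-1}(\tc))$ by the conjugacy relation, so
$$K|\tf^n(\tc)-\tc'|^\alpha\ge |h^{-1}(\tf^n(\tc))-h^{-1}(\tc')|=|f^n(h^{-1}(\tc))-h^{-1}(\tc')|\ge Cn^{-\beta},$$
whence $|\tf^n(\tc)-\tc'|\ge (C/K)^{1/\alpha}n^{-\beta/\alpha}$, so $\tf\in\PR(\beta')$ with $\beta'=\beta/\alpha$. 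There is no serious obstacle: the only point requiring care is verifying that \emph{both} maps satisfy the Exponential Shrinking of Components condition, so that Lemma~\ref{conjugacybiholder} can be invoked in both directions to yield a genuinely bi-H\"older conjugacy; once that is in place the polynomial decay rate is preserved by the computation above, with the H\"older exponent $\alpha$ being the only quantity that worsens the exponent $\beta$.
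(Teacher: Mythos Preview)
Your proposal is correct and follows exactly the approach the paper intends: the paper explicitly states that Lemma~\ref{lem:prc} is obtained by following the proof of Proposition~\ref{Pro:both}, and your argument reproduces that proof with the polynomial bound $Cn^{-\beta}$ in place of the stretched-exponential one, arriving at $\beta'=\beta/\alpha$. If anything, you are slightly more explicit than the paper in justifying the bi-H\"older claim by invoking Lemma~\ref{conjugacybiholder} in both directions.
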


\begin{proposition}\label{pro:ce}
Let $f:I \to I$ be an interval map in $\sA_\R$ that satisfies the Exponential Shrinking of Components condition with respect to some $\lambda>1$, then there is $\beta_0>0$ such that the following holds. If $f$ satisfies the exponential recurrence condition with respect to $\alpha$ in $(0, \beta_0)$,  then $f$ satisfies the Collet-Eckmann condition.
\end{proposition}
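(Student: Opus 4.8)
The plan is to establish the Collet--Eckmann inequality $|Df^n(f(c))|\geq C\lambda_1^n$ directly, for a suitable $\lambda_1\in(1,\lambda)$ and a constant $C>0$ independent of the critical point $c$ and of $n$. Note first that the exponential recurrence condition forces $f^k(c)\notin\Crit(f)$ for all $k\geq 1$, so that none of the derivatives in question vanish; since $\Crit(f)$ is finite it then suffices to produce, for each fixed $c$, constants making the inequality hold for all $n\geq 1$. Fix such a $c$, write $v:=f(c)$, and fix $n\geq 1$. Choose $\delta>0$ smaller than the scale provided by the Exponential Shrinking of Components condition and, using Lemma~\ref{l:pullstable}, small enough that every pull-back of a ball of radius $2\delta$ has length less than half the minimal distance between distinct critical points. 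For $0\leq j\leq n$ let $W_j$ be the pull-back of $B(f^n(v),\delta)$ by $f^{\,n-j}$ containing $f^j(v)$; the Exponential Shrinking of Components condition gives $|W_j|\leq C_0\lambda^{-(n-j)}$, and by the choice of $\delta$ each $W_j$ contains at most one critical point. Call an index $j\in\{0,\dots,n-1\}$ a \emph{critical return} if $W_j$ contains a critical point $c_j$.

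The first ingredient is a confinement estimate. If $j$ is a critical return then $f^j(v)$ and $c_j$ both lie in the interval $W_j$, so $|f^j(v)-c_j|\leq|W_j|\leq C_0\lambda^{-(n-j)}$; on the other hand the exponential recurrence condition with exponent $\alpha$ gives $|f^j(v)-c_j|=|f^{\,j+1}(c)-c_j|\geq C'\exp(-\alpha(j+1))$. Combining, $(n-j)\log\lambda\leq\alpha(j+1)+O(1)$, so every critical return occurs at a time $j\geq(1-\theta)n-O(1)$ with $\theta:=\alpha/\log\lambda$; in particular there are at most $O(\theta n)+O(1)$ critical returns, all contained in a terminal window of length $O(\theta n)$. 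The second ingredient is that \emph{free} stretches expand at rate $\asymp\lambda$: if $m\geq 1$ and the iterate $f^m$ is a diffeomorphism on the pull-back $\hat W$ of a ball $B(f^m(x),2\delta)$ containing $x$, then since the concentric $\delta$-ball is $1$-well inside the $2\delta$-ball, Lemma~\ref{koebe} bounds the distortion of $f^m$ on the inner pull-back, and combined with the Exponential Shrinking of Components bound on the size of that inner pull-back it yields $|Df^m(y)|\geq C_1\lambda^m$ for every $y$ in the inner pull-back.

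With these two ingredients I would decompose $\{0,\dots,n-1\}$ into \emph{free blocks}, on which a suitable iterate is a diffeomorphism of the relevant pull-backs and which therefore expand at rate $\asymp\lambda$, and \emph{bound blocks} $\{j,j+1,\dots,j+p_j\}$ attached to each critical return $j$, during which the orbit of $f^{\,j+1}(v)$ shadows the critical-value orbit of $c_j$. Writing $d:=|f^j(v)-c_j|$ and $\ell:=\ell_{c_j}$, non-flatness of $f$ at $c_j$ gives $|f^{\,j+1}(v)-f(c_j)|\asymp d^{\,\ell}$, and the standard bound-period estimate yields $|Df^{\,p_j+1}(f^j(v))|\asymp \delta\,d^{-\ell}\cdot d^{\,\ell-1}=\delta/d$, which is at least $1$; moreover the Exponential Shrinking of Components condition forces $p_j\leq \tfrac{\ell}{\log\lambda}\log(1/d)+O(1)$, since the shadowing interval is a pull-back of a $\delta$-ball and hence cannot stay of size comparable to $d^{\,\ell}$ for more than that many iterates. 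Consequently $|Df^{\,p_j+1}(f^j(v))|\gtrsim d^{-1}\gtrsim \lambda_1^{\,p_j+1}$ with $\lambda_1:=\lambda^{1/(2L)}$ and $L:=\max_{c'\in\Crit(f)}\ell_{c'}$, so bound blocks also expand at rate $\geq\lambda_1$. Multiplying the per-block bounds over the whole decomposition — there being only $O(\theta n)$ blocks, so that their individual multiplicative constants cost only an arbitrarily small amount in the exponential rate once $\alpha$, hence $\theta$, is small — gives $|Df^n(v)|\geq C\lambda_1^n$. The threshold $\beta_0$ is then any value small enough that this bookkeeping closes, and it can be taken to depend only on $\lambda$, on $L$, and on the constants in the Exponential Shrinking of Components condition and in Lemma~\ref{koebe}.

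I expect the main obstacle to be the bound-period analysis, and in particular the possibility of nested returns: while the orbit of $f^{\,j+1}(v)$ shadows the critical-value orbit of $c_j$, the latter orbit may itself come close to a critical point, producing a sub-bound-block inside a bound block and, a priori, a cascade of such sub-blocks. Showing that the exponential recurrence condition with sufficiently small exponent bounds the depth of this nesting and the total contraction it contributes — so that the clean rate $\lambda_1$ survives the passage to the finest decomposition — is the technical heart of the argument; it is here that the precise value of $\beta_0$ is determined and that the standard shrinking-neighbourhood device is needed to supply the Koebe spaces uniformly. This is precisely the combinatorial difficulty that is treated by more elaborate means in \cite{LuzWan06}; organising it through the Exponential Shrinking of Components condition and Koebe distortion is what makes the present argument shorter.
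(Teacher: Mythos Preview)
Your approach --- decompose into free and bound blocks and run a Benedicks--Carleson style argument --- is genuinely different from the paper's, and in fact the paper's route is the simpler one. The paper avoids bound periods altogether by a single device: instead of pulling back a ball of \emph{fixed} radius $\delta$, it pulls back $B\bigl(f^n(v),\delta_0 M^{-m}\bigr)$, where $M:=\sup_I|Df|$ and $m$ is the least integer with $\lambda^{-m}\le C_\alpha e^{-\alpha(n+1)}$. Since $f^m$ expands lengths by at most $M^m$, this small ball lies inside a pull-back of $B\bigl(f^{n+m}(v),\delta_0\bigr)$, so Exponential Shrinking gives $|W_j|\le\lambda^{-(m+n-j)}<C_\alpha e^{-\alpha j}$ for every $0\le j\le n$; by the exponential recurrence condition \emph{every} $W_j$ is disjoint from $\Crit(f)$, hence $f^n$ is a diffeomorphism on $W_0$ and one application of Lemma~\ref{koebe} yields $|Df^n(v)|\ge C\lambda^{n+m}M^{-m}$. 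The only cost is the factor $(\lambda/M)^m$, and the explicit choice $\beta_0=(\log\lambda)^2/\bigl(2(\log M-\log\lambda)\bigr)$ is exactly what makes $\lambda^n(\lambda/M)^m\ge\lambda_1^{\,n}$ for some $\lambda_1>1$ whenever $\alpha<\beta_0$. No critical returns ever occur in the chain, so there is nothing to decompose, and $\beta_0$ depends only on $\lambda$ and $M$, not on the critical orders.

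Your sketch, by contrast, leaves its hardest step open. The bound-period estimate $|Df^{\,p_j+1}(f^j(v))|\asymp\delta/d$ relies on bounded distortion while the orbit of $f^{j+1}(v)$ shadows that of $f(c_j)$, which in turn requires controlled expansion along the orbit of $c_j$ itself --- precisely the Collet--Eckmann property you are trying to establish. You flag this circularity correctly in your final paragraph (nested returns), but you do not resolve it; carrying that out is essentially the content of \cite{LuzWan06}. So as written the proposal has a genuine gap at the decisive point, and the paper's shrinking-target trick is what \emph{eliminates} that difficulty rather than reorganising it.
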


\begin{proof}
Putting $M:=\sup_I|Df|,$  by the Exponential Shrinking of Components condition we have that $M>1$. Moreover, reducing $\lambda$ if necessary, we assume that $\lambda$ is in $(1, M)$, and let $\delta_0>0$ be given by the Exponential Shrinking of Components condition with respect to $\lambda$. Setting $$\beta_0=\frac{(\log \lambda)^2}{2(\log M- \log \lambda)},$$ we prove the proposition with $\beta_0.$ To do this, fix $\alpha$ in $(0, \beta_0),$ and assume that $f$  satisfies the exponential recurrence condition with respect to $\alpha.$ Then there exists $C_\alpha > 0$ such that for any two critical points $c, c'$ in $\Crit(f)$ and any
$n \geq 1$, we have
\begin{equation}\label{e:erc}
|f^n(c)- c'|\geq C_\alpha\exp\left(-\alpha n\right).
 \end{equation}
 For any critical value $v$ of $f$ and every positive integer~$n,$  let $m\geq 0$ be the smallest integer such that $$\lambda^{-m}\leq C_\alpha \exp\left(- \alpha (n+1)\right).$$  Since $f^m(B(f^n(v), \delta_0 M^{-m}))\subset B(f^{n+m}(v), \delta_0),$ the Exponential Shrinking of Components condition implies that for each $j$ in $\{0, 1, \cdots, n\},$ the connected component~$W_j$ of $f^{-(n-j)}(B(f^n(v),\delta_0 M^{-m}))$ containing $f^{j}(v)$ satisfies $$\diam (W_j)\leq \lambda^{-(m+n-j)}\leq \lambda^{-(n-j)} C_\alpha \exp\left(-\alpha (n+1)\right)< C_\alpha \exp\left(- \alpha j\right).$$ In particular, $\diam (W_0)\leq \lambda^{-(m+n)}.$  On the other hand, combining inequality~(\ref{e:erc}), we have that each $W_j$ is disjoint from $\Crit(f)$. It follows that the map $$f^n: W_0\to B(f^n(v), \delta_0 M^{-m})$$ is a diffeomorphism. By Lemma~\ref{koebe},  there exists some constant $C>0$ independent of~$n$ and~$v$, such that the following holds
\begin{equation}\label{d1}
|Df^n(v)|\geq \frac{CM^{-m}}{\diam (W_0)}\geq \frac{C M^{-m}}{\lambda^{-(m+n)}} = C \lambda ^{n+m} M^{-m}.
\end{equation}
If $m=0$, then we obtain $$|Df^n(v)|\geq C\lambda^n.$$ Otherwise, setting $$C':=(\lambda^{-1} C_\alpha )^{\frac{\log M}{\log \lambda} -1}$$ and by the minimality of $m$ we have
\begin{align*}
\lambda^m M^{-m}&=(\lambda^{-m})^{\frac{\log M}{\log \lambda} -1}>\left(\lambda^{-1} C_\alpha\exp\left(- \alpha (n+1)\right)\right)^{\frac{\log M}{\log \lambda} -1}\\&= C'\exp\left(- \alpha (n+1) \left(\frac{\log M}{\log \lambda }-1\right)\right).
\end{align*}
If putting $\alpha_*:=2\alpha \left(\frac{\log M}{\log \lambda }-1\right)$, then we have $\lambda_1:=\lambda \exp(-\alpha_*)>1$ by the definition of $\alpha$. Moreover,  putting $C_0:=C C'>0$, and using with (\ref{d1}), for every positive integer $n,$ we have
\begin{align*}
|Df^n(v)|&\geq C C'\lambda^n \exp\left(- \alpha (n+1) \left(\frac{\log M}{\log \lambda }-1\right)\right)\\&\ge C_0 \lambda^n \exp\left(-\alpha_* n\right)=C_0 \lambda_1^n .
\end{align*}
This implies that the map $f$ satisfies the Collet-Eckmann condition for some~$\lambda_1$, and completes the proof.
\end{proof}

\begin{proof}[Proof of Theorem~\ref{thm:1}] By Lemma~\ref{tce1esc}, we have that both of $f$ and $\tf$ satisfy the Exponential Shrinking of Components condition. It follows from  Proposition~\ref{Pro:both} that $\tf $ satisfies the stretched exponential  recurrence condition. Therefore, we have that $\tf $ satisfies the subexponential  recurrence condition. Moreover, by Proposition~\ref{pro:ce}, we have that $\tf$ satisfies the Collet-Eckmann condition, and complete the proof.
\end{proof}
\begin{proof}[Proof of Theorem~\ref{thm:2}] By Lemma~\ref{tce1esc}, we have that both $f$ and $\tf$ satisfy the Exponential Shrinking of Components condition. It follows from  Lemma~\ref{lem:erc} that $\tf $ satisfies the subexponential  recurrence condition.  Moreover, by Proposition~\ref{pro:ce}, we have that~$\tf$ satisfies the Collet-Eckmann condition, and complete the proof.
\end{proof}
\begin{proof}[Proof of Theorem~\ref{thm:3}] By Lemma~\ref{tce1esc}, we have that both $f$ and $\tf$ satisfy the Exponential Shrinking of Components condition. It follows from  Lemma~\ref{lem:prc} that $\tf $ satisfies the polynomial recurrence condition. Therefore, we have that~$\tf $ satisfies the subexponential  recurrence condition.  Moreover, by Proposition~\ref{pro:ce}, we have that~$\tf$ satisfies the Collet-Eckmann condition, and complete the proof.
\end{proof}

\section{Proof of Theorem~\ref{thm:lw06}}\label{sec:LuzWangTheorem}
In this section, we  give a new and conceptual proof of Theorem~\ref{thm:lw06}.
In what follows, let~$f: I\to I$ be a multimodal map in $\sA_\R$, and suppose that $f$ satisfies both the Collet-Eckmann condition and the slow recurrence condition. Let $g: J\to J$ be a multimodal map in $\sA_\R$ that is
 topologically conjugate to $f$ by the conjugacy $h:I\to J$ preserving critical points. By Lemmas~\ref{tce1esc} and~\ref{conjugacybiholder},  we know that $h$ is bi-H\"{o}lder continuous. It follows that there exist two constants $K>1$ and $\alpha$ in $(0,1)$ such that for every $x,\, y$ in $I$ we have $$|h(x)-h(y)|\leq K|x-y|^\alpha;$$ and for every $s,\, t$ in $J$ we have $$|h^{-1}(s)-h^{-1}(t)|\leq K|s-t|^\alpha.$$

First, we prove the following proposition before proving the theorem.
\begin{proposition}\label{pro:slowyexp2ce}
Assume that $g$ in $\sA_\R$ satisfies simultaneously the Exponential Shrinking of Components condition and the slow recurrence condition. Then $g$ satisfies the Collet-Eckmann condition.
\end{proposition}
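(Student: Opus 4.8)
The plan is to follow the scheme of the proof of Proposition~\ref{pro:ce}, the one new feature being that the forward orbit of a critical value of $g$ need no longer stay away from $\Crit(g)$, so one cannot pull back a single small ball along the whole orbit while avoiding $\Crit(g)$. Instead I would pull back only along the stretches of the orbit lying between consecutive ``returns'' to a small neighbourhood of $\Crit(g)$, where the Exponential Shrinking of Components condition alone forces the pull-backs to be too small to meet $\Crit(g)$; the derivative at the (rare) return times is handled by non-flatness of the critical points, and the total loss is controlled by the slow recurrence condition. First I would recast slow recurrence in a workable form: for every $\eps>0$ there exist $\delta=\delta(\eps)\in(0,1)$, which may be taken as small as we please, and $C_\eps>0$, such that for every $c\in\Crit(g)$ and every $n\ge 1$,
\[
\sum_{1\le i\le n,\ g^i(c)\in B(\Crit(g),\delta)}\bigl(-\log d(g^i(c))\bigr)\ \le\ \eps n+C_\eps ;
\]
since each summand is at least $-\log\delta>0$, this also bounds the number of such $i$ by $(\eps n+C_\eps)/(-\log\delta)$, a fraction of $n$ that is small once $\delta$ is small.

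Next I would record the standard consequences of $g\in\sA_\R$ being $C^3$ with non-flat critical points: writing $\ell:=\max_c\ell_c$, there is $A>1$ with $|Dg(x)|\ge A^{-1}d(x)^{\ell-1}$ for $x$ near $\Crit(g)$, and for every $\delta>0$ one has $c_\delta:=\inf\{|Dg(x)|:d(x)\ge\delta\}>0$, with $c_\delta\ge A^{-1}\delta^{\ell-1}$ when $\delta$ is small. Then I fix a critical value $v=g(c)$ and $n\ge1$, put $R:=\{0\le i<n:\ g^i(v)\in B(\Crit(g),\delta)\}$ (so, as $g^i(v)=g^{i+1}(c)$, the above yields $\sum_{i\in R}(-\log d(g^i(v)))\le\eps n+O(1)$ and $|R|\le(\eps n+O(1))/(-\log\delta)$), and factor
\[
|Dg^n(v)|=\Bigl(\prod_{i\in R}|Dg(g^i(v))|\Bigr)\Bigl(\prod_{0\le i<n,\ i\notin R}|Dg(g^i(v))|\Bigr)=:P_R\cdot P_{R^c}.
\]
For $P_R$ the non-flatness bound gives $P_R\ge A^{-|R|}\exp\!\bigl(-(\ell-1)\sum_{i\in R}(-\log d(g^i(v)))\bigr)\ge \mathrm{const}_\eps\,e^{-O(\eps)n}$ by the two displayed estimates on the sum and on $|R|$.

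For $P_{R^c}$ I decompose $\{0,\dots,n-1\}\setminus R$ into its maximal runs of consecutive integers $J_1,\dots,J_k$, noting $k\le|R|+1$. On a run $J_l=\{a,\dots,b-1\}$ I write $\prod_{i\in J_l}|Dg(g^i(v))|=|Dg^{|J_l|-1}(g^a(v))|\cdot|Dg(g^{b-1}(v))|$, bound the last factor below by $c_\delta$ (since $b-1\notin R$), and bound $|Dg^{|J_l|-1}(g^a(v))|$ from below by pulling back a ball $B(g^{b-1}(v),r_*)$ of a small radius $r_*=r_*(\delta)$ along $g^{|J_l|-1}$: because every point $g^a(v),\dots,g^{b-1}(v)$ lies outside $B(\Crit(g),\delta)$, the same device as in Proposition~\ref{pro:ce} (enclose $B(x,\delta_0M^{-m})$ in a pull-back of a $\delta_0$-ball, $M:=\sup_I|Dg|$, $\delta_0$ the shrinking scale, and apply the Exponential Shrinking of Components condition with $m=m(\delta)$ fixed so that the resulting pull-backs have diameter $<\delta$) makes $g^{|J_l|-1}$ a diffeomorphism onto $B(g^{b-1}(v),r_*)$ with Koebe space coming from $B(g^{b-1}(v),2r_*)$; then Lemma~\ref{koebe} together with the shrinking estimate gives $|Dg^{|J_l|-1}(g^a(v))|\ge C^*\lambda^{|J_l|-1}$ for a constant $C^*=C^*(\delta)>0$. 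Multiplying over $l$, $P_{R^c}\ge(C^*c_\delta)^k\lambda^{|R^c|-k}\ge\mathrm{const}_\eps\,\lambda^n\,e^{-O(\eps)n}$, since $|R^c|\ge n-|R|$, $k\le|R|+1$, and the constants $C^*,c_\delta$ are only polynomially large in $1/\delta$.

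Combining, $|Dg^n(v)|\ge\mathrm{const}_\eps\,\lambda^n e^{-c_2\eps n}$ with $c_2$ depending only on $g$ (through $\lambda$, $\sup_I|Dg|$ and $\ell$). Choosing $\eps$ once and for all so small that $\lambda_1:=\lambda e^{-c_2\eps}>\sqrt\lambda>1$, and then $\delta=\delta(\eps)$ small enough to meet the finitely many smallness requirements above, gives $|Dg^n(g(c))|\ge C\lambda_1^{\,n}$ for all $c\in\Crit(g)$ and all $n\ge1$ with $C,\lambda_1$ uniform; together with the hyperbolic-repelling periodic orbits guaranteed by $g\in\sA_\R$, this is the Collet--Eckmann condition. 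The step I expect to need the most care is exactly this bookkeeping of $\delta$-dependent constants: one must check that each loss --- at the return derivatives in $P_R$, at the $\le|R|+1$ junctions between runs, and from $|R^c|<n$ --- is, up to a multiplicative constant, of the form $e^{O(\eps)n}$, so that a single fixed small choice of $\eps$ beats the uniform gain $\lambda^n$ furnished by the Exponential Shrinking of Components condition.
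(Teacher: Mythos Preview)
Your proposal is correct and follows essentially the same approach as the paper: decompose the orbit of a critical value at its returns to a small neighbourhood of $\Crit(g)$, use the Exponential Shrinking of Components condition together with Koebe distortion (Lemma~\ref{koebe}) to obtain exponential derivative growth on the diffeomorphic stretches between returns, bound the derivative at each return via non-flatness, and invoke the slow recurrence condition to show that the accumulated losses are of order $e^{O(\eps)n}$. The paper packages the same idea as a \emph{quasi-chain} $\{\widehat W_k\}$ that is reset to a ball of radius $\eta_4$ whenever a pull-back meets $\Crit(g)$, but the decomposition, the estimates, and the final bookkeeping are the same as yours.
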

\begin{proof}
 By the hypothesis that all critical points of~$g$ are non-flat, there exist $\ell>1$, $\kappa_0>0$ and $L>1$ such that for every critical point~$c^g$ in $\Crit(g)$ and every $x\in B(c^g, \kappa_0),$ we have
\begin{equation}\label{e:nonflat}
|Df(x)|\geq \frac{1}{L}|x-c^g|^{\ell}.
\end{equation}
Let $\eta_1$ be the constant gave by Lemma~\ref{l:pullstable} for $\kappa=\kappa_0,$ and let $C, \xi$ be the constants given by  Lemma~\ref{koebe} with $\tau =\frac{1}{2}$. Moreover, since~$g$ satisfies  the Exponential Shrinking of Components condition, then there are constants $\hC>2$, $\eta_2 > 0$
and $\lambda > 1$ such that for every interval $T$ contained in $J$ that satisfies $|T|\leq 2\eta_2,$
the following holds: For every positive integer $n$ and every connected component
$W$ of $g^{-n}(J)$ we have $|W|\leq \hC\lambda^{-n}.$

 Now setting $\eta_0=\min\{\eta_1,\, \eta_2, \, \xi\},$ and fix a critical point $c^g$ in $\Crit(g)$ and put $v^g:=g(c^g).$ Put $$\vartheta:=\frac{\log(2(C\hC)^{-1})}{\log \eta_0}+\log(\lambda L)+ 2\ell\,\, \,\,\text{and}\,\,\,\,\, \eps_*:=\frac{\log\lambda}{2\vartheta}.$$ Since the map~$g$ satisfies the slow recurrence condition, then there exist $\eta_4$ in $(0,\, \min\{\eta_0, \, \kappa_0\})$ and a positive integer $N_1$ such that for every $\delta$ in $(0, \eta_4]$ and every positive integer~$n\geq N_1,$ the following inequality holds. \begin{equation}\label{e:gss}
\frac{1}{n}\sum_{\substack{1\leq i\leq n\\ g^i(c^g)\in \cB_g(\delta)}}-\log d(g^i(c^g))< \eps_*.
\end{equation}
In particular,
 \begin{equation}\label{e:nnnumbercri}
 \#\{1\leq i \leq n: g^i(c^g)\in \cB_g(\delta)\}\leq \frac{n\eps_*}{-\log \eta_4}< n\eps_*.
 \end{equation}

 For every $n\geq N_1$,  let us define a {\em quasi-chain}
$\{\widehat W_k\}_{k=0}^n$ by the following rules:
\begin{itemize}
\item[(i)] $\widehat
W_n=B(g^n(v^g), \eta_4)$;
\item[(ii)] Once $\widehat W_{k+1}\ni g^{k+1}(v^g)$
is defined, letting $\widehat W_k'$ be the connected component of
$g^{-1}( \widehat W_{k+1})$ which contains $g^k(v^g)$;
\item[(iii)] If
$\widehat W_k'$ contains no critical point of $g$, then $\widehat
W_k=\widehat W_k'$, and otherwise, let $$\widehat W_k=B(g^k(v^g),
\eta_4).$$
\end{itemize}

Let $n_0=n$ and let $n_1>n_2>\cdots> n_m$ be all integers in
$\{1,\ldots, n-1\}$ such that $\widehat W_{n_i}$ contains a
critical point of $g$. Then by inequality~(\ref{e:nnnumbercri}) we have
\begin{equation}\label{bothside}
m \leq \frac{n\eps_*}{-\log \eta_4}\leq n\eps_*.
\end{equation}
 Moreover, for every $1\leq i\leq m$, the map $g^{n_{i-1}-n_i-1}: \widehat W_{n_i+1}\to
\widehat W_{n_{i-1}}$ is a diffeomorphism,
$\widehat W_{n_{i-1}}\subset B(c, 2\eta_0)$ for some $c\in\Crit(g)$ and $g^{n-n_{i-1}}(v^g)$ is the middle point of~$W_{n_{i-1}}$. Then by Lemma~\ref{koebe}, and the Exponential Shrinking of Components condition, we have
\begin{equation}\label{e:chainde1}
|Dg^{n_{i-1}-n_i-1}(g^{n-n_i+1}(v^g))|\ge C_1\lambda^{n_{i-1}-n_i-1}.
\end{equation}
where $C_1=2\eta_4(C \hC)^{-1} \in (0,1).$  Similarly, we have
\begin{equation}\label{e:chainde2}
|Dg^{n_{m}}((v^g))|\ge C_1\lambda^{n_{m}}.
\end{equation}

On the other hand, by the choice of $\eta_4$ and inequality~(\ref{e:nonflat}), for every positive integer~$i$ such that $1\leq i\leq m$ we have
\begin{equation}\label{e:chainde3}
|Dg(g^{n_{i}}(v^g))|\ge \frac{1}{L}|g^{n_{i}}(v^g)-c^g|^{\ell} \geq \frac{(d(g^{n_{i}}(v^g)))^{\ell}}{L}.
\end{equation}
Therefore, combining inequalities~(\ref{e:chainde1}), (\ref{e:chainde2}) and~(\ref{e:chainde3}), we have
\begin{align*}
|Dg^n(v^g)|&=\prod_{i=1}^m|Dg^{n_{i-1}-n_i-1}(g^{n-n_i+1}(v^g))|\cdot\prod_{i=1}^m|Dg(g^{n_{i}}(v^g))|\cdot|Dg^{n_{m}}((v^g))|\\&\geq \lambda^{n}C_1^m\left(\frac{ 1}{\lambda L}\right)^{m}\cdot \prod_{i=1}^m(d(g^{n_{i}}(v^g)))^{\ell}.
\end{align*}
It follows from~(\ref{bothside}) and the slow recurrence condition that
\begin{multline*}
\frac{\log|Dg^n(v^g)|}{n}\geq \log\lambda+\frac{m}{n} \log C_1-\frac{m}{n}\log(\lambda L)+ \frac{\ell}{n}\sum_{i=1}^m\log d(g^{n_{i}}(v^g))\\\geq\log\lambda-\frac{n\eps_*}{\log \eta_4}\frac{1}{n} \log \left(\frac{2\eta_4}{C\hC}\right)-\frac{n\eps_*}{n}\log(\lambda L)+ \frac{\ell}{n}\sum_{i=1}^m\log d(g^{n_{i}}(v^g))\\=\log\lambda-\frac{\eps_*}{\log \eta_4} \log \left(\frac{2\eta_4}{C\hC}\right)-\eps_*\log(\lambda L)+ \frac{\ell}{n}\sum_{i=1}^m\log d(g^{n_{i}}(v^g))\\\geq \log\lambda-\eps_*\left(\frac{\log(2(C\hC)^{-1})}{\log \eta_4}+\log(\lambda L)+ 1\right)+ \frac{\ell}{n}\sum_{\substack{1\leq i\leq n\\ g^i(c^g)\in \cB_g(\eta_4)}} \log d(g^{n_{i}}(v^g))\\\geq \log \lambda- \eps_*\left(\frac{\log(2(C\hC)^{-1})}{\log \eta_4}+\log(\lambda L)+ 1+\ell\right)
\end{multline*}
Therefore, by the slow recurrence condition and the definition of $\eps_*$, we have
\begin{align*}
 \liminf_{n\to \infty}\frac{1}{n}\sum_{i=1}^{n}\log |Dg^i(v^g)|&\geq \log \lambda -\eps_*\left(\frac{\log(2(C\hC)^{-1})}{\log \eta_0}+\log(\lambda L)+ 1+\ell\right)\\&= \frac{1}{2} \log \lambda,
\end{align*}
 and complete the proof of the proposition.
\end{proof}

\begin{proof}[Proof of Theorem~\ref{thm:lw06}]
Let $h$, $K$ and $\alpha$ be as given in the beginning of this section. First, combining Lemma~\ref{tce1esc} and the topological invariance of the TCE condition, we know that $g$ satisfies the Exponential Shrinking of Components condition. In view of Proposition~\ref{pro:slowyexp2ce}, it remains to prove that the map~$g$ satisfies the slow recurrence condition. In fact, it suffices to prove that for every critical point $c^g$ in $\Crit(g)$ and  any $\eps>0,$ there exist $\delta_0>0$ and a positive integer~$N$ such that for every $\delta$ in $(0, \delta_0]$ and every positive integer~$n\geq N,$ the following inequality holds. \begin{equation}\label{e:gs}
\frac{1}{n}\sum_{\substack{1\leq i\leq n\\ g^i(c^g)\in \cB_g(\delta)}}-\log d(g^i(c^g))< \eps.
 \end{equation}
 Fix a critical point~$c^g$ in $\Crit(g)$ and $\eps>0.$ Putting $c^f:=h^{-1}(c^g),$ then we have by the hypothesis that $c^f$ is in $\Crit(f)$. Moreover, by the hypothesis on $f$ we have that for $\eps':=\frac{\alpha\eps}{1+\log K}>0,$ there exist $\delta_1$ in $(0, e^{-1})$ and a positive integer $N$ such that for every $\delta$ in $(0, \delta_1]$ and every positive integer~$n\geq N,$ the following inequality holds.
 \begin{equation}\label{e:fs}
 \frac{1}{n}\sum_{\substack{1\leq i\leq n\\ f^i(c^f)\in \cB_f(\delta)}}-\log d(f^i(c^f))< \eps'.
 \end{equation}
 It follows that
 \begin{equation}\label{e:numbercri}
 \#\{1\leq i \leq n: f^i(c^f)\in \cB_f(\delta_1)\}\leq \frac{n\eps'}{-\log \delta_1}< n\eps'.
 \end{equation}
 Now setting $\delta_0:=\left(\frac{\delta_1}{K}\right)^{\frac{1}{\alpha}}$, and note that for every  point $c_1^g$ in $\Crit(g)$ and every  positive integer $i$ we have that  $h^{-1}(c_1^g)$ is in $\Crit(f)$, and that
 \begin{equation*}
 |f^i(c^f)-h^{-1}(c_1^g)|=|h^{-1}(g^i((c^g))-h^{-1}(c_1^g)|\leq K|g^i(c^g)-c_1^g|^\alpha.
 \end{equation*}
 It follows that for every $\delta$ in $(0, \delta_0]$,   every positive integer $i$ such that $g^i(c^g)\in \cB_g(\delta),$ we have that $f^i(c^f)$ is in $\cB_f(\delta_1)$ and $$d(g^i(c^g))\geq \left(\frac{d(f^i(c^f))}{K}\right)^{\frac{1}{\alpha}}.$$ Therefore, combining inequalities~(\ref{e:fs}) and~(\ref{e:numbercri}), for every $\delta$ in $(0, \delta_0]$ and every positive integer $n\geq N,$ we have
 \begin{multline*}
 \frac{1}{n}\sum_{\substack{1\leq i\leq n\\ g^i(c^g)\in \cB_g(\delta)}}-\log d(g^i(c^g))
 \leq \frac{1}{n}\sum_{\substack{1\leq i\leq n\\ f^i(c^f)\in \cB_f(\delta_1)}}-\log \left[\left(\frac{d(f^i(c^f))}{K}\right)^{\frac{1}{\alpha}}\right]
 \\ =\frac{1}{n}\sum_{\substack{1\leq i\leq n\\ f^i(c^f)\in \cB_f(\delta_1)}}-\frac{1}{\alpha}\log d(f^i(c^f))+\frac{1}{n}\sum_{\substack{1\leq i\leq n\\ f^i(c^f)\in \cB_f(\delta_1)}}\frac{\log K}{\alpha}
 \\ \leq \frac{1}{\alpha}\eps'+\frac{1}{n}\cdot n\eps'\cdot \frac{\log K}{\alpha}=\frac{1+\log K}{\alpha}\cdot \eps'=\eps.
 \end{multline*}
 This proves inequality~(\ref{e:gs}), and so the map $g$ satisfies the slow recurrence condition. The proof of the theorem is completed.
\end{proof}

\appendix
\section{Rational maps}~\label{sec:rational}
In this appendix, let $f$ be a rational map of degree at least $2$. Let $\Crit( f )$ denote the set of critical points of $f$, let $J( f )$ denote the Julia set of  $f$ and let $$\Crit'(f)=\Crit( f )  \cap J( f ).$$  As the definitions  defined in the real setting, we can also define the Collect-Eckmann condition,  the stretched exponential condition,  the TCE condition and the Exponential Shrinking of Components condition for rational maps.

More precisely, we say that  the rational map   $f$ satisfies the \emph{Collet-Eckmann condition} (abbreviated CE condition) if all the periodic points of $f$ are hyperbolic
repelling, and if there are constants $C>0$ and $\lambda >1$ such that for each critical point $c$ in  $\Crit'(f)$, we have $$|Df^n(f(c))|\geq C\lambda^n.$$

Given $\beta$ in $(0, 1),$  we say that the rational map $f$ satisfies the \emph{stretched exponential recurrence condition of exponent $\beta$}, if there exists $C > 0$ such that for any two critical points $c, c'$ in $\Crit'(f)$ and any
$n \geq 1$, we have
$$|f^n(c)- c'|\geq C\exp\left(-n^{\beta}\right).$$  Moreover, the map $f$ is said to satisfy the \emph{stretched exponential recurrence condition} if~$f\in \ser(\beta)$ for some $\beta$  in $(0, 1)$.

Given $\beta>0$, we  say that a multimodal  map $f$ in  $\sA_\R$  satisfies the \emph{exponential recurrence condition with respect to $\beta$}, if there exists $C > 0$ such that for any two critical points $c, c'$ in $\Crit(f)$ and any positive integer
$n \geq 1$, we have
$$|f^n(c)- c'|\geq C\exp\left(-{\beta}n\right).$$ Moreover, we say that $f$ in  $\sA_\R$  satisfies the \emph{subexponential recurrence condition} if for every $\beta>0$ the map $f$ satisfies the exponential recurrence condition with respect to $\beta$.

 Given $\beta>0,$  we  say that a multimodal  map $f$ in  $\sA_\R$  satisfies the \emph{polynomial recurrence condition with respect to exponent $\beta$}, if there exists $C > 0$ such that for any two points $c, c'\in \Crit(f)$ and any
$n \geq 1$, we have
$$|f^n(c)- c'|\geq Cn^{-\beta}.$$ Moreover, $f$ is said to satisfy the \emph{polynomial recurrence condition } if $f\in \PR(\beta)$ for some $\beta>0$.

 We can also  use the analogy proof of Theorem~\ref{thm:1}  to prove the following analog of Theorem~\ref{thm:1}  in the complex setting. In fact, this is a feature of the method of the proof of Theorem~\ref{thm:1}.

\begin{theorem}\label{thm:rational}
Let $f, \tf$ be two rational maps of degree at least two, and that are topologically conjugate by a conjugacy preserving critical points. Assume that
$f$ satisfies both the Collet-Eckmann condition and the stretched exponential (resp. subexponential, polynomial) recurrence condition. Then $\tf$ also  satisfies both the Collet-Eckmann condition and  the stretched exponential (resp. subexponential, polynomial)  recurrence condition.
\end{theorem}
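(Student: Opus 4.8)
The plan is to re-run the proofs of Theorems~\ref{thm:1}--\ref{thm:3} with each of the three ingredients they use replaced by its rational-map counterpart. Three remarks make the translation routine: (a)~the restriction of a rational map of degree at least two to its Julia set is automatically topologically exact, so no hypothesis analogous to ``topologically exact'' is needed; (b)~a rational map is Lipschitz continuous on $\CC$ for the spherical metric, so it plays the role of a ``Lipschitz continuous multimodal map''; (c)~by Przytycki--Rivera-Letelier--Smirnov and Rivera-Letelier the rational-map counterparts of Lemmas~\ref{tce1esc} and~\ref{conjugacybiholder} are available, namely: for a rational map the CE condition implies the Exponential Shrinking of Components condition and hence the TCE condition; the TCE and ESC conditions are equivalent; the TCE condition is topologically invariant and forces all periodic points in the Julia set to be hyperbolic repelling; and a topological conjugacy between a rational map and a rational map satisfying ESC is H\"older continuous on the Julia set.

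Granting (a)--(c), the argument is the same as for Theorems~\ref{thm:1}--\ref{thm:3}. Since $f$ satisfies CE it satisfies ESC, hence TCE; since TCE is topologically invariant and the conjugacy $h$ preserves critical points, $\tf$ satisfies TCE, hence ESC with respect to some $\lambda>1$, and in particular all periodic points of $\tf$ in $J(\tf)$ are hyperbolic repelling. Because $\tf$ is Lipschitz and $f$ satisfies ESC, the rational analog of Lemma~\ref{conjugacybiholder} gives that $h^{-1}\colon J(\tf)\to J(f)$ is H\"older continuous, with some constant $K$ and exponent $\alpha\in(0,1]$; moreover $h$ carries $\Crit'(f)$ bijectively onto $\Crit'(\tf)$ and $h^{-1}(\tf^{\,n}(\tc))=f^{\,n}(h^{-1}(\tc))$ for $\tc\in\Crit'(\tf)$ and $n\ge 1$. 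Arguing exactly as in the proof of Proposition~\ref{Pro:both}, the recurrence condition transfers from $f$ to $\tf$: if $f\in\ser(\beta)$ then $\tf\in\ser(\beta')$ for some $\beta'\in(\beta,1)$, and similarly in the subexponential and polynomial cases. In each of the three cases $\tf$ then satisfies the exponential recurrence condition with respect to every positive exponent (immediate in the subexponential case; and for $\ser(\beta')$ and $\PR(\beta')$ because $n^{\beta'}=o(n)$ and $\beta'\log n=o(n)$, so the discrepancy is absorbed into the constant). Applying the rational analog of Proposition~\ref{pro:ce} to $\tf$ with an exponent below the threshold it provides yields a bound $|D\tf^{\,n}(\tf(\tc))|\ge C\lambda_1^{\,n}$ with $\lambda_1>1$ for every $\tc\in\Crit'(\tf)$; together with the hyperbolicity of the periodic orbits this is the CE condition for $\tf$, and the respective recurrence condition for $\tf$ has already been obtained.

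The only genuine work is in the two rational analogs that are not pure quotations, and I expect that—rather than any new idea—to be the main point requiring care. The analog of Proposition~\ref{pro:ce} is a line-by-line transcription of its proof, with the interval Koebe Lemma~\ref{koebe} replaced by the classical Koebe distortion theorem for univalent maps of planar domains, applied in a fixed holomorphic chart around a neighborhood of the compact set $J(\tf)$ (legitimate because every orbit segment and pull-back occurring in the argument stays near $J(\tf)$, where the spherical and Euclidean metrics are comparable), and with the assertion that each $W_j$ is disjoint from $\Crit(\tf)$—so that $\tf^{\,n}$ restricts to a conformal homeomorphism from $W_0$ onto the relevant round ball—justified by the elementary fact that a connected branched cover of a disk with no branch points is a homeomorphism onto the disk; note that non-flatness of the critical points is not used here (it enters only in Proposition~\ref{pro:slowyexp2ce}, which is not needed for this theorem). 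The analog of Lemma~\ref{conjugacybiholder} follows by the same mechanism as in the interval case—ESC for $f$ forces the pull-backs under $f^{\,n}$ of round disks centered on $J(f)$ to have exponentially small diameter, while the Lipschitz bound on $\tf$ bounds from below how fast a point can contract under $\tf^{\,n}$, and comparing along an orbit segment of length comparable to $\log(1/r)$ gives $|h^{-1}(x)-h^{-1}(y)|\le K|x-y|^{\alpha}$—or it may simply be cited, together with the CE $\Rightarrow$ ESC $\Rightarrow$ (repelling periodic orbits) implications, from the papers of Przytycki--Rivera-Letelier--Smirnov and of Rivera-Letelier.
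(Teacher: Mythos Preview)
Your proposal is correct and follows essentially the same approach as the paper: re-run the real proofs with the rational-map substitutes for Lemmas~\ref{tce1esc} and~\ref{conjugacybiholder}, then invoke the analog of Proposition~\ref{pro:ce}. The paper's own proof is even terser---it simply cites \cite[Main Theorem]{PrzRivSmi03} for the TCE/ESC equivalence and \cite[Theorem~A]{PrzRoh99} (Przytycki--Rohde, rather than the references you name) for the bi-H\"older conjugacy, and leaves the rest to the reader; your additional remarks on the Koebe distortion analog and on working in charts near $J(\tf)$ are details the paper does not spell out.
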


\begin{proof}
The proof of this theorem follows the same outline as that of Theorem~\ref{thm:1}, replacing Lemma~\ref{tce1esc} and Lemma~\ref{conjugacybiholder}  by the
following two results, respectively:
\begin{itemize}
\item  TCE condition is  equivalent to the Exponential Shrinking of Components condition for rational maps,
see~\cite[Main Theorem]{PrzRivSmi03};
\item
The conjugacy between two rational maps of degree at least~$2$ is bi-H\"{o}lder continuous, see~\cite[Theorem~A]{PrzRoh99}.
\end{itemize}
 The details are left to the reader.
\end{proof}

\end{document}